\documentclass[11pt,twoside]{article}

\usepackage{amssymb}
\usepackage{amsmath}
\usepackage{amsthm}
\usepackage{color}

\allowdisplaybreaks

\pagestyle{myheadings}\markboth{Huoxiong Wu, Dongyong Yang and Jing Zhang}
{Oscillation and  variation for semigroups  associated with Bessel operators}

\textwidth =158mm
\textheight =225mm
\oddsidemargin 2mm
\evensidemargin 2mm
\headheight=13pt
\setlength{\topmargin}{-0.6cm}

\parindent=13pt

\def\rr{{\mathbb R}}

\def\nn{{\mathbb N}}

\def\cm{{\mathcal M}}

\def\fz{\infty}

\def\supp{{\mathop\mathrm{\,supp\,}}}

\def\lz{\lambda}

\def\ez{\epsilon}

\def\ls{\lesssim}
\def\gs{\gtrsim}

\def\tbz{{\triangle_\lz}}
\def\dmz{{dm_\lz}}

\def\plz{{P^{[\lz]}_t}}

\def\rrp{{{\mathbb\rr}_+}}
\def\bmoz{{{\rm BMO}(\mathbb\rrp,\, dm_\lz)}}

\def\ltz{{L^2(\rrp,\, dm_\lz)}}
\def\lrz{{L^r(\rrp,\, dm_\lz)}}

\def\loz{{L^1(\rrp,\, dm_\lz)}}

\def\lpz{{L^p(\rrp,\, dm_\lz)}}

\def\linz{{L^\fz(\rrp,\, dm_\lz)}}
\def\hoz{{H^1({\mathbb R}_+,\, dm_\lz)}}

\def\varpz{{\mathcal V}_\rho(P^{[\lz]}_\ast)}
\def\oscpz{{\mathcal O(P^{[\lz]}_\ast)}}
\def\boscpz{{\mathcal O\lf(P^{[\lz]}_\ast\r)}}

\def\dsum{\displaystyle\sum}
\def\dint{\displaystyle\int}

\def\dfrac{\displaystyle\frac}
\def\dsup{\displaystyle\sup}

\def\r{\right}
\def\lf{\left}

\def\beeqn{\begin{equation}}
\def\eneqn{\end{equation}}
\def\beeqns{\begin{equation*}}
\def\eneqns{\end{equation*}}
\def\beeqa{\begin{eqnarray}}
\def\eneqa{\begin{eqnarray}}
\def\beeqas{\begin{eqnarray*}}
\def\eneqas{\begin{eqnarray*}}
\def\besp{\begin{split}}
\def\ensp{\begin{split}}
\def\noz{\nonumber}

\newtheorem{thm}{Theorem}[section]
\newtheorem{lem}[thm]{Lemma}%[section]
\newtheorem{prop}[thm]{Proposition}%[section]
\newtheorem{rem}[thm]{Remark}%[section]
%[section]
\newtheorem{defn}[thm]{Definition}%[section]
%[section]

\numberwithin{equation}{section}

\begin{document}

\arraycolsep=1pt

\title{\Large\bf  Oscillation and  variation for semigroups  associated with Bessel operators}
\author{Huoxiong Wu, Dongyong Yang\,\footnote{Corresponding author}\, and Jing Zhang}

%\medskip
\date{}
\maketitle

\begin{center}
\begin{minipage}{13.5cm}\small

{\noindent  {\bf Abstract:}\  Let $\lambda>0$  and
$\triangle_\lambda:=-\frac{d^2}{dx^2}-\frac{2\lambda}{x}
\frac d{dx}$ be the Bessel operator on $\mathbb R_+:=(0,\infty)$. We
show that the oscillation operator ${\mathcal O(P^{[\lambda]}_\ast)}$ and variation operator
${\mathcal V}_\rho(P^{[\lambda]}_\ast)$ of the Poisson semigroup $\{P^{[\lambda]}_t\}_{t>0}$ associated with
 $\Delta_\lambda$ are both bounded on $L^p(\mathbb R_+, dm_\lambda)$ for $p\in(1, \infty)$, $BMO({{\mathbb R}_+},dm_\lambda)$,
 from $L^1({{\mathbb R}_+},dm_\lambda)$ to $L^{1,\,\infty}({{\mathbb R}_+},dm_\lambda)$,
 and from $H^1({{\mathbb R}_+},dm_\lambda)$ to $L^1({{\mathbb R}_+},dm_\lambda)$, where  $\rho\in(2, \infty)$
 and $dm_\lambda(x):=x^{2\lambda}\,dx$.
As an application, an equivalent characterization of
$H^1({{\mathbb R}_+},dm_\lambda)$ in terms of ${\mathcal V}_\rho(P^{[\lambda]}_\ast)$ is also established.
All these results hold if $\{P^{[\lambda]}_t\}_{t>0}$ is replaced by the heat semigroup $\{W^{[\lambda]}_t\}_{t>0}$. }

\end{minipage}
\end{center}

\bigskip
\bigskip

{ {\it Keywords}: oscillation; variation; Bessel operator; Poisson semigroup; heat semigroup.}

\medskip

{{Mathematics Subject Classification 2010:} {42B20; 42B35; 42B30}}

\section{Introduction and statement of main results\label{s1}}

Let $(\mathcal X,  \mu)$ be a measure space and ${\mathcal T}_{\ast}:=\{T_\epsilon\}_{\epsilon>0}$
 a family of operators bounded on $L^p(\mathcal X,  \mu)$ for $p\in(1, \fz)$
such that $\lim_{\epsilon\to0}T_{\epsilon}f$ exists
in some sense.  A classical way to measure the speed of convergence of $\{T_\epsilon\}_{\epsilon>0}$ is to
study square functions of the type $(\sum_{i=1}^\fz|T_{\epsilon_i}f-T_{\epsilon_{i+1}}f|^2)^{1/2}$,
where $\epsilon_i\to0$. Recently, other expressions have been considered,
among which are the $\rho$-variation and the oscillation operators;
see, for instance, \cite{Bou,bct,b14,cjrw1,cjrw2,cmtt,cmtv,gt,jr,jsw}. Recall that variation operator $\mathcal{V}_{\rho}({\mathcal T}_{\ast}f)$ is defined by
 \begin{equation}\label{d-var}
 \mathcal{V}_{\rho}({\mathcal T}_{\ast}f)(x):= \sup_{\epsilon_i\searrow0}
 \Big(\sum_{i=1}^{\infty}|T_{\epsilon_{i+1}}f(x)-T_{\epsilon_{i}}f(x)|^{\rho}\Big)^{1/\rho},
 \end{equation}
 where the supremum is taken over all sequences $ \{\epsilon_{i}\}$ decreasing to zero. The oscillation operator $\mathcal{O}({\mathcal T}_{\ast}f)$ can be introduced as
\begin{equation}\label{d-oci}
\mathcal{O}({\mathcal T}_{\ast}f)(x):=\Big(\sum_{i=1}^{\infty}\sup_{\epsilon_{i+1}\leq t_{i+1}
  <t_{i}\leq \epsilon_{i}}|T_{t_{i+1}}f(x)-T_{t_{i}}f(x)|^2\Big)^{1/2}
\end{equation}
 with $\{\epsilon_{i}\}$ being a fixed sequence decreasing to zero.

The $L^p$-boundedness of these operators were studied by Bourgain \cite{Bou} for
 $p=2$ and by Jones et al. \cite{jkrw} for $p\in[1, \fz)$ in the context of ergodic theory.
 Since then, in harmonic analysis, the study of boundedness of oscillation and variation operators associated with semigroups
 of operators and families of truncations of singular integrals have been paid more and more attention.
 In particular,  Campbell et al. \cite{cjrw1} first established the
strong $(p,\,p)$-boundedness in the range $1<p<\fz$ and the weak
type $(1,\,1)$-boundedness of the oscillation operator
 and the $\rho$-variation operator for the Hilbert transform. Subsequently, in \cite{cjrw2}, Campbell et al. further extended the results in \cite{cjrw1}
to the higher dimensional cases including  Riesz transforms and  general singular integrals with rough
homogeneous kernels in $\mathbb{R}^d$.
 On the other hand, Jones and Reinhold \cite{jr} obtained the $L^p$-boundedness properties for $p\geq1$
 of the oscillation and variation operators associated with the symmetric diffusion semigroup(see Lemma \ref{l-seimgroup bdd}).
  Crescimbeni et al. \cite{cmtv} subsequently established weighted variation inequalities of  heat
  semigroup and  Poisson semigroup associated to Laplacian and Hermite operator. For more results on oscillation and variation operators,
 we refer the readers to \cite{bct,b14,cmtt,jsw,LW,ZW} and the references therein.

 Let $\lz$ be a positive constant
%\in\rrp:=(0, \fz)$
and $\tbz$ be the Bessel operator
which is defined by setting, for all suitable functions $f$ on $\rrp :=(0, \fz)$,
%and $x\in \rrp$,
\begin{equation*}%\label{bessel 1}
\tbz f(x):=-\frac{d^2}{dx^2}f(x)-\frac{2\lz}{x}\frac{d}{dx}f(x).
\end{equation*}
An early work concerning the Bessel operator goes back to Muckenhoupt and Stein \cite{ms}.
They  developed a theory associated to
$\tbz$ which is parallel to the classical one associated to the Laplace operator $\triangle$.
 Since then, a lot of work concerning the Bessel operators was carried out; see, for example \cite{ak,bcfr,bfbmt,bfs,bhnv, dlwy, k78,v08,yy}.
In particular, Betancor et al. in \cite{bdt} established the
characterizations of the atomic Hardy space $H^1((0, \fz), \dmz)$
associated to $\tbz$ in terms of the Riesz transform and the radial
maximal function related to a class of functions including the Poisson semigroup $\{P_{t}^{[\lz]}\}_{t>0}$ and the heat
semigroup $\{W^{[\lz]}_t\}_{t>0}$ as special cases, where $\dmz(x):= x^{2\lz}\,dx$ and $dx$ is the Lebesgue measure.

The aim of this paper is to prove the $\lpz$-boundedness and their endpoint estimates of the oscillation and variation operators for
$\{P_{t}^{[\lz]}\}_{t>0}$ and $\{W^{[\lz]}_t\}_{t>0}$, respectively.  To this end, we recall some necessary notation.

Let $P_{\ast}^{[\lz]}:=\{\plz\}_{t>0}$ be a family of Poisson semigroup operators defined by
\begin{equation*}\label{DPoisson}
P_{t}^{[\lz]}f(x) := e^{-t\sqrt{\tbz}}f(x)=\int_0^{\infty}P_{t}^{[\lz]}(x,y)f(y)\,y^{2\lz}dy \ ,
\end{equation*}
where $J_\nu$ is the Bessel function of the first kind of order $\nu$ with $\nu\in(-1/2, \fz)$ and
\begin{equation}\label{kernel}
\begin{array}[b]{cl}
\plz(x, y)&=\dint_0^\fz e^{-tz}(xz)^{-\lz+1/2}J_{\lz-1/2}(xz)(yz)^{-\lz+1/2}
J_{\lz-1/2}(yz)\, dm_\lz(z)\\
&=\dfrac{2\lz t}{\pi}\dint_0^\pi\dfrac{(\sin\theta)^{2\lz-1}}
{(x^2+y^2+t^2-2xy\cos\theta)^{\lz+1}}\,d\theta,\,\,t,x,y\in(0,\,\infty);
\end{array}
\end{equation}
see \cite{bdt}.

Let $\{t_j\}_{j>0}$ be a fixed
decreasing sequence converging to zero and $\rho>2$. The $\rho$-variation operator $\varpz$ and
oscillation operator $\oscpz$ associated with the Poisson semigroup are defined by setting, for all suitable functions $f$ and $x\in \mathbb{R}_{+}$,
\begin{equation*}\label{varia}
\mathcal{V}_{\rho}\lf(P_{\ast}^{[\lz]}\r)f(x):=\sup_{t_j\searrow0}\lf(\sum_{j=1}^\fz\lf|P^{[\lz]}_{t_{j+1}}f(x)-P^{[\lz]}_{t_j}f(x)\r|^\rho\r)^{1/\rho}
\end{equation*}
and
\begin{equation*}\label{oscia}
\mathcal{O}\lf(P_{\ast}^{[\lz]}\r)f(x):=\lf(\sum_{j=1}^\fz\sup_{t_{j+1}\le \ez_{j+1}<\ez_j\le t_j}\lf|P^{[\lz]}_{\ez_{j+1}}f(x)-P^{[\lz]}_{\ez_{j}}f(x)\r|^2\r)^{1/2}.
\end{equation*}

The first main result of this paper is as follows.

\begin{thm}\label{t-bdd semigroup}
Let $\rho\in(2, \fz)$. The operators $\oscpz$ and $\varpz$ are both bounded
\begin{itemize}
  \item [{\rm (i)}] from  $\lpz$ to itself for any $p\in(1, \fz)$.
  \item [{\rm (ii)}] from $\loz$ to $L^{1,\,\fz}(\rrp,\,dm_\lz)$.
\end{itemize}
\end{thm}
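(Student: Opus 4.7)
The plan is to separate the two assertions. For (i), the idea is to recognize $\{\plz\}_{t>0}$ as a symmetric diffusion semigroup on $\ltz$ and apply the abstract theorem of Jones--Reinhold recorded as Lemma \ref{l-seimgroup bdd}. Since $-\tbz$ is non-negative and self-adjoint on $\ltz$, the Poisson semigroup is self-adjoint and positivity-preserving; the explicit kernel formula (\ref{kernel}) together with the Markov property of the associated heat semigroup $\{W_t^{[\lz]}\}_{t>0}$ (via subordination) shows that $\plz 1=1$ and that $\{\plz\}_{t>0}$ is a contraction on $\lpz$ for every $p\in[1,\fz]$. Hence the hypotheses of Lemma \ref{l-seimgroup bdd} are satisfied and (i) follows immediately.

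For (ii) the plan is to view $\oscpz$ and $\varpz$ as $E$-norms of a single vector-valued operator $T$ and then run vector-valued Calder\'on--Zygmund theory on the space of homogeneous type $(\rrp,|\cdot|,\dmz)$. Let $E$ denote the Banach space whose norm on a family $(a(t))_{t>0}$ is either the $\rho$-variation norm or the oscillation norm associated with $\{t_j\}$. Then both operators take the form $f\mapsto\|Tf(\cdot)\|_E$, where
\[
Tf(x):=\lf(\int_0^\fz \plz(x,y)f(y)\,\dmz(y)\r)_{t>0}.
\]
Part (i) with $p=2$ supplies the $L^2$ endpoint of $T$, so a standard Calder\'on--Zygmund decomposition on $(\rrp,\dmz)$ reduces the weak $(1,1)$ estimate to the H\"ormander-type condition
\[
\int_{|x-y_0|>2|y-y_0|}\lf\|K(x,y)-K(x,y_0)\r\|_E\,\dmz(x)\ls 1
\]
for the $E$-valued kernel $K(x,y):=(\plz(x,y))_{t>0}$.

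The principal obstacle is verifying this smoothness estimate. For both the variation and the oscillation norm, the telescoping identity $f(s)-f(r)=\int_s^r f'(t)\,dt$ combined with the elementary bound $\sqrt{\sum_j a_j^2}\le\sum_j a_j$ for non-negative $a_j$ produces the pointwise dominance
\[
\lf\|K(x,y)-K(x,y_0)\r\|_E\le\int_0^\fz\lf|\partial_t\lf[\plz(x,y)-\plz(x,y_0)\r]\r|\,dt,
\]
which, after the mean value theorem in the second variable, reduces matters to a uniform-in-$\xi$ pointwise bound for $\partial_t\partial_y\plz(x,\xi)$ obtained by differentiating (\ref{kernel}). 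The main technical difficulty is the $\theta$-integral: near $\theta=0$ the denominator $(x^2+\xi^2+t^2-2x\xi\cos\theta)^{\lz+1}$ collapses when $x$ and $\xi$ are close, so one must split the integration into the diagonal region where $x\xi\theta^2\ls(x-\xi)^2+t^2$ and its complement, balancing the smoothing factor $(\sin\theta)^{2\lz-1}$ against the singular denominator on each piece. Integrating the resulting bound in $t$ should yield a decay of the order $|x-\xi|^{-1}[m_\lz(I(x,|x-\xi|))]^{-1}$, which integrated against $\dmz(x)$ on $|x-y_0|>2|y-y_0|$ and combined with the factor $|y-y_0|$ from the mean value step delivers the required $O(1)$ bound, finishing the proof of (ii).
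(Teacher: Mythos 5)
Your proposal is correct and follows essentially the same route as the paper: part (i) by verifying the hypotheses $({\rm T_i})$--$({\rm T_{iv}})$ of the Jones--Reinhold lemma, and part (ii) by a Calder\'on--Zygmund decomposition on the space of homogeneous type $(\rrp,|\cdot|,\dmz)$ reduced to a H\"ormander-type smoothness condition on the kernel. Your target bound $\int_0^\fz\lf|\partial_y\partial_t\plz(x,\xi)\r|dt\ls\lf[m_\lz(I(y_0,|x-y_0|))\,|x-y_0|\r]^{-1}$ is exactly the paper's estimate \eqref{upp bdd deriva meas} integrated in $t$, obtained there from the same two-regime analysis of the $\theta$-integral (Proposition \ref{p-upp bdd poisson}) that you describe.
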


For the endpoint $p=1$, we also consider the boundedness of $\oscpz$ and $\varpz$ on Hardy space $\hoz$ in \cite{bdt}.
Throughout this paper, for any $x$, $r\in \mathbb{R}_+$, we define $I(x, r):=(x-r, x+r)\cap \mathbb{R}_+$.

%
%
%The following definition of $H^1(\rrp,\,\dmz)$ was introduced by Bettancor et al. in \cite{bdt}.

%Characterizations of function spaces associated to the Bessel operator $\tbz$ were also studied
%by many authors. For example, Betancor et al. in \cite{bdt}
%characterized the atomic Hardy space $H^1(\mathbb{R}_+, \dmz)$
%associated to $\tbz$ in terms of the Riesz transform and the radial
%maximal function associated with the Hankel convolution of a class
%of suitable functions. The recent work \cite{dlwy} established a factorisation of the Hardy space associated to $\tbz$
%and a characterisation of the BMO space associated to $\tbz$ through commutators.

\begin{defn}[\cite{bdt}]\label{d-H^1}\rm
A measurable function $a$ is called an $H^1(\rrp,\,\dmz)$-atom if there exist a bounded interval $I\subset[0,\infty)$
such that
$$\supp a\subset I,\  \|a\|_{L^{\infty}(\rrp,\,\dmz)}\le 1/m_{\lz}(I) \,{\rm and}\, \int_{0}^{\infty}a(x)\dmz(x)=0.$$
 A function $f\in L^1(\rrp,\,\dmz)$ is in $H^1(\rrp,\,\dmz)$ if and only if $f(x)=\sum_{j=1}^{\infty}\alpha_ja_j(x)$ in $L^1(\rrp,\,\dmz)$,
 where for every $j$, $a_j$ is an $H^1(\rrp,\dmz)$-atom and $\alpha_j\in\mathbb{C}$, with $\sum_{j=1}^{\infty}|\alpha_j|<\infty$.
 The norm $\|f\|_{H^1(\rrp,\,\dmz)}$ is defined by
\begin{equation*}\label{dH^1}
\|f\|_{H^1(\rrp,\,\dmz)}:=\inf\sum_{j=1}^{\infty} |\alpha_j| ,
\end{equation*}
where the infimum is taken over all possible decompositions of $f$ as above.
 \end{defn}

%The aim of this paper is to build up the $L^p(\mathbb R_+, \dmz)$-boundedness
%properties and the $BMO(\rrp,\dmz)$-boundedness for the oscillation operator $\oscpz$ and $\rho$-variation operator $\varpz$.
%Base on the $L^p(\mathbb R_+, \dmz)$-boundedness properties, the $(H^1(\rrp,\dmz),\,L^1(\rrp,\dmz))$-type endpoint
%estimation  also be established. Our main results can be formulated as follows.

We now state the boundedness of $\oscpz$ and $\varpz$ from $H^1(\rrp,\,\dmz)$ introduced in \cite{bdt} to $\loz$.

\begin{thm}\label{t-H1 semigroup}
Let $\rho\in(2, \fz)$. The operators $\oscpz$ and $\varpz$ are both bounded from $\hoz$ to $\loz$.
\end{thm}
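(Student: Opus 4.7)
The plan is to prove the $H^1 \to L^1$ boundedness by the standard atomic argument: by Definition \ref{d-H^1} and the sublinearity of $\oscpz$ and $\varpz$, it suffices to establish a uniform bound
\[
\|\oscpz a\|_{\loz} + \|\varpz a\|_{\loz} \le C
\]
for every $H^1(\rrp,\,dm_\lz)$-atom $a$ supported on some bounded interval $I:=I(x_0,r)$, with constant $C$ independent of $a$. Fix such an atom and set $\widetilde I:=I(x_0,2r)$. Split
\[
\int_0^\fz \oscpz a(x)\,dm_\lz(x)=\int_{\widetilde I}+\int_{\rrp\setminus\widetilde I}=: \mathrm{I}+\mathrm{II},
\]
and likewise for $\varpz$.

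For the local piece $\mathrm{I}$, I would apply the Cauchy--Schwarz inequality together with the $\ltz$-boundedness furnished by Theorem \ref{t-bdd semigroup}(i):
\[
\mathrm{I}\le m_\lz(\widetilde I)^{1/2}\,\|\oscpz a\|_{\ltz}\ls m_\lz(\widetilde I)^{1/2}\,\|a\|_{\ltz}\ls m_\lz(\widetilde I)^{1/2}\,m_\lz(I)^{-1/2}\ls 1,
\]
where the last bound uses the size estimate $\|a\|_{\liz}\le 1/m_\lz(I)$ together with the doubling property of $dm_\lz$. The same computation handles the $\varpz$ version.

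For the global piece $\mathrm{II}$, I would exploit the cancellation $\int_0^\fz a(y)\,dm_\lz(y)=0$ to write, for each $t>0$,
\[
P^{[\lz]}_t a(x)=\int_I\bigl[P^{[\lz]}_t(x,y)-P^{[\lz]}_t(x,x_0)\bigr]a(y)\,dm_\lz(y).
\]
Plugging this into the definitions of the oscillation and variation operators and applying Minkowski's integral inequality leads to
\[
\oscpz a(x)+\varpz a(x)\ls \int_I|a(y)|\,\ct(x,y,x_0)\,dm_\lz(y),
\]
where $\ct(x,y,x_0)$ denotes the $1$-variation of the map $t\mapsto P^{[\lz]}_t(x,y)-P^{[\lz]}_t(x,x_0)$, namely $\int_0^\fz|\pa_t[P^{[\lz]}_t(x,y)-P^{[\lz]}_t(x,x_0)]|\,dt$ (since $1$-variation dominates both $\mathcal O$ and $\cv_\rho$). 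The goal is then the pointwise kernel regularity estimate
\[
\ct(x,y,x_0)\ls \frac{|y-x_0|}{|x-x_0|\,m_\lz(I(x,|x-x_0|))},\qquad y\in I,\ x\notin\widetilde I,
\]
which I would derive from the explicit formula \eqref{kernel} by computing $\pa_t\pa_y P^{[\lz]}_t(x,z)$, integrating in $t$ over $(0,\fz)$, and then using the mean value theorem in the variable $z\in[x_0,y]$. Once this estimate is in hand, Fubini and the standard Dini-type computation
\[
\int_{\rrp\setminus \widetilde I}\frac{dm_\lz(x)}{|x-x_0|\,m_\lz(I(x,|x-x_0|))}\ls \frac{1}{r}
\]
(which follows from dyadic decomposition and the doubling of $dm_\lz$) yields
\[
\mathrm{II}\ls \int_I|a(y)|\,\frac{|y-x_0|}{r}\,dm_\lz(y)\ls \|a\|_{\liz}\,m_\lz(I)\ls 1.
\]

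The principal obstacle is the kernel regularity estimate for $\ct(x,y,x_0)$: obtaining the mixed time--space gradient bound on the Bessel Poisson kernel in a form compatible with the non-translation-invariant measure $dm_\lz(x)=x^{2\lz}\,dx$ requires careful manipulation of the integral representation in \eqref{kernel}, including separating the regimes $|x-x_0|\gg x_0$ and $|x-x_0|\ls x_0$ so that the Bessel-type balls $I(x,|x-x_0|)$ can be compared to $I(x_0,|x-x_0|)$ via doubling. The near part and the reduction of the variation/oscillation by the $1$-variation integral are comparatively routine once Theorem \ref{t-bdd semigroup} is available.
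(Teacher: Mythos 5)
Your proposal is correct and follows essentially the same route as the paper: atomic reduction, Cauchy--Schwarz plus the $\ltz$-bound on $2I$, and on the complement the domination of both operators by the $1$-variation $\int_0^\fz|\pa_tP^{[\lz]}_t a(x)|\,dt$ combined with the atom's cancellation, the mean value theorem, and the mixed-derivative kernel bound. The estimate you single out as the principal obstacle is exactly the paper's inequality \eqref{upp bdd deriva meas}, which is already established in Section \ref{s3} from the two complementary bounds of Proposition \ref{p-upp bdd poisson} iv) by the same case split you describe.
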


As a consequence of Theorem \ref{t-H1 semigroup}, we have the following characterization of
$\hoz$ via $\mathcal V_\rho\lf(P_{\ast}^{\lz}\r)$.

\begin{thm}\label{t-char semigroup}
Let $\rho\in(2,\fz)$. A function  $f\in\loz$ is in $\hoz$ if and only if
$\mathcal V_\rho(P_{\ast}^{[\lz]})(f)\in \loz$. Moreover, there exists a positive constant $C>1$ such that
$$\|f\|_\hoz/C\le \|f\|_\loz+\lf\|\mathcal V_\rho\lf(P_{\ast}^{[\lz]}\r)(f)\r\|_\loz\le C\|f\|_\hoz.$$
\end{thm}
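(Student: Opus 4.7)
The plan is to split the claimed norm equivalence into two inequalities. The first is an immediate consequence of Theorem \ref{t-H1 semigroup}, and the second reduces to the radial Poisson maximal function characterization of $\hoz$ established in \cite{bdt}.

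For the direction $\|f\|_\loz + \|\varpz(f)\|_\loz \ls \|f\|_\hoz$, Theorem \ref{t-H1 semigroup} yields $\|\varpz(f)\|_\loz \ls \|f\|_\hoz$, while the atomic definition of $\hoz$ gives $\|f\|_\loz \le \|f\|_\hoz$ directly.

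For the converse, assume $f \in \loz$ with $\varpz(f) \in \loz$. The central step is the pointwise estimate
\begin{equation*}
\sup_{s > 0} \lf|P^{[\lz]}_s f(x)\r| \le |f(x)| + \varpz(f)(x) \quad \text{for a.e. } x \in \rrp.
\end{equation*}
To prove it, fix $s > 0$ and $\ez \in (0, s)$, and extend the pair $(s, \ez)$ to a decreasing sequence $\{t_j\}_{j \ge 1}$ with $t_1 = s$, $t_2 = \ez$ and $t_j \searrow 0$. The single difference $|P^{[\lz]}_\ez f(x) - P^{[\lz]}_s f(x)|$ is dominated by the $\ell^\rho$-sum of consecutive differences along this sequence, hence by $\varpz(f)(x)$. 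Letting $\ez \to 0^+$ and using the a.e. convergence $P^{[\lz]}_\ez f(x) \to f(x)$ give $|P^{[\lz]}_s f(x) - f(x)| \le \varpz(f)(x)$; the triangle inequality and the supremum over $s$ then produce the displayed bound. Integrating and invoking the radial Poisson maximal function characterization of $\hoz$ from \cite{bdt} then yields
\begin{equation*}
\|f\|_\hoz \ls \lf\|\sup_{s > 0} |P^{[\lz]}_s f|\r\|_\loz \le \|f\|_\loz + \|\varpz(f)\|_\loz,
\end{equation*}
which completes the proof of the norm equivalence.

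The main technical point is the a.e. convergence $P^{[\lz]}_\ez f \to f$ as $\ez \to 0^+$ for arbitrary $f \in \loz$. This is standard in the Bessel setting: one approximates $f$ in $\loz$ by a smooth compactly supported function, for which pointwise convergence follows from the explicit kernel \eqref{kernel} by dominated convergence, and controls the error via the weak-type $(1, 1)$ boundedness of the radial Poisson maximal operator, which is classical in this context \cite{ms}. Once the pointwise bound is in hand the rest of the argument is essentially a citation to \cite{bdt}.
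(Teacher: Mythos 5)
Your proposal is correct and follows essentially the same route as the paper: the forward direction is read off from Theorem \ref{t-H1 semigroup}, and the converse rests on the pointwise bound $\cm_{P^{[\lz]}}(f)\le |f|+\varpz(f)$ a.e.\ (proved by the same trick of embedding a pair of times into a decreasing sequence) together with the maximal-function characterization of $\hoz$ from \cite{bdt}. The only cosmetic difference is that the paper obtains the needed a.e.\ convergence $P^{[\lz]}_tf\to f$ directly from the $L^1$-convergence cited from \cite{yy}, whereas you rerun the standard density argument; both are fine.
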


For $p=\fz$, we study the boundedness of $\oscpz$ and $\varpz$  on $\bmoz$ in \cite{bdt}.

\begin{defn}[\cite{yy}]\label{d-bmo}\rm
A function $f\in L^1_{\rm loc}(\rrp,\,dm_\lambda)$ belongs to
the {space} $\bmoz$ if
\begin{equation*}\label{mofi}
\|f\|_{\rm BMO(\rrp,\,\dmz)}:=\sup_{x,\,r\in (0,\,\infty)}
\frac 1{m_\lambda(I(x,r))}\int_{I(x,\,r)}|f(y)-f_{I(x,\,r),\,\lambda}|\,y^{2\lambda}dy<\infty,
\end{equation*}
where
\begin{equation}\label{f-i}
f_{I(x,\,r),\,\lambda}:=\frac 1{m_{\lambda}(I(x,r))}\int_{I(x,\,r)}f(y)\,y^{2\lambda}dy.
\end{equation}
\end{defn}

Our result concerning the boundedness of $\oscpz$ and $\varpz$ on $\bmoz$ is stated as below.

\begin{thm}\label{t-bmo semigroup}
Let $\rho\in(2, \fz)$. The operators $\oscpz$ and $\varpz$ are both bounded  on $\bmoz$.
\end{thm}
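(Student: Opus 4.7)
The plan is to fix an interval $I=I(x_0,r)\subset\rrp$ and $f\in\bmoz$, decompose $f=f_1+f_2+f_{2I,\,\lz}$ with $f_1:=(f-f_{2I,\,\lz})\chi_{2I}$ and $f_2:=(f-f_{2I,\,\lz})\chi_{\rrp\setminus 2I}$, and show directly that $\oscpz(f)$ and $\varpz(f)$ have mean oscillation $\lesssim\|f\|_{\bmoz}$ over $I$. Since $\tbz 1=0$ forces $P_t^{[\lz]}1=1$ for every $t>0$, both $\oscpz$ and $\varpz$ annihilate the constant term. By Minkowski in $\ell^2$ or $\ell^\rho$ combined with subadditivity of the inner suprema, both operators are sublinear, and in particular $|\oscpz(f)-\oscpz(f_2)|\le\oscpz(f_1)$, with the analogue for $\varpz$. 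Taking $c_I:=\oscpz(f_2)(x_0)$, the BMO estimate reduces to bounding the two quantities
\[
\mathrm{I}_I:=\frac{1}{m_\lz(I)}\int_I\oscpz(f_1)(y)\,y^{2\lz}\,dy,\qquad\mathrm{II}_I:=\frac{1}{m_\lz(I)}\int_I|\oscpz(f_2)(y)-\oscpz(f_2)(x_0)|\,y^{2\lz}\,dy
\]
by $C\|f\|_{\bmoz}$, together with their $\varpz$-analogues.

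For $\mathrm{I}_I$, the $\ltz$-boundedness of $\oscpz$ and $\varpz$ supplied by Theorem~\ref{t-bdd semigroup}(i), combined with the John--Nirenberg estimate $\|f_1\|_{\ltz}\lesssim m_\lz(2I)^{1/2}\|f\|_{\bmoz}$ on the space of homogeneous type $(\rrp,\,\dmz)$, the Cauchy--Schwarz inequality, and the doubling of $\dmz$, immediately yield $\mathrm{I}_I\lesssim\|f\|_{\bmoz}$.

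For $\mathrm{II}_I$, I would first record the pointwise estimate
\[
|\oscpz(f_2)(y)-\oscpz(f_2)(x_0)|+|\varpz(f_2)(y)-\varpz(f_2)(x_0)|\lesssim\int_0^{\fz}|\partial_tP_t^{[\lz]}f_2(y)-\partial_tP_t^{[\lz]}f_2(x_0)|\,dt,
\]
which follows from reverse triangle in $\ell^2$ (resp.\ $\ell^\rho$), the embedding $\ell^1\hookrightarrow\ell^\rho$ valid for $\rho\ge 1$, and the fundamental theorem of calculus on each $(t_{j+1},t_j)$. The crucial ingredient is then the H\"older-type kernel estimate
\[
\int_0^{\fz}|\partial_tP_t^{[\lz]}(y,z)-\partial_tP_t^{[\lz]}(x_0,z)|\,dt\lesssim\lf(\frac{|y-x_0|}{|x_0-z|}\r)^{\!\dz}\frac{1}{m_\lz(I(x_0,|x_0-z|))},\quad |y-x_0|\le\tfrac12|x_0-z|,
\]
for some $\dz\in(0,1)$. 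Inserting this into the integral for $f_2$ and decomposing $\rrp\setminus 2I=\bigcup_{k\ge 1}(2^{k+1}I\setminus 2^kI)$, the elementary estimate $|f_{2^{k+1}I,\,\lz}-f_{2I,\,\lz}|\lesssim k\|f\|_{\bmoz}$ together with the convergent series $\sum_{k}k\,2^{-k\dz}$ produce $\mathrm{II}_I\lesssim\|f\|_{\bmoz}$ uniformly in $y\in I$.

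The principal obstacle is proving the H\"older-type kernel bound itself. Differentiating \eqref{kernel} in $t$ generates two competing terms whose cancellation must be exploited; I expect to split the $\theta$-integral according to whether $yz(1-\cos\theta)$ dominates or is dominated by $(y-z)^2+t^2$, applying the mean value theorem in the spatial variable on each region to extract the factor $(|y-x_0|/(t+|x_0-z|))^{\dz}$, and then integrating in $t$ using the standard comparison between $m_\lz(I(y,\cdot))$ and $m_\lz(I(x_0,\cdot))$. Once this kernel estimate is secured, the rest is routine BMO machinery on $(\rrp,\,\dmz)$.
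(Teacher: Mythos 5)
Your proposal is correct, and it is a genuinely cleaner route than the one the paper takes. The paper does not use the constant $\oscpz(f_2)(x_0)$; instead it builds an intricate constant $C_B$ adapted to the index $j_0$ where the fixed sequence $\{t_j\}$ crosses the scale $8r$, splits the $j$-sum into the regimes $j\le j_0-2$, $j=j_0-1$, $j\ge j_0$, and ends up with four terms ${\rm F}_1$--${\rm F}_4$: the small scales are treated with the decomposition $f_1+f_2+f_{I,\,\lz}$ and a $t$-integration restricted to $(0,8r)$, the large scales with the spatial smoothness of the kernel and $t>8r$, plus two transition terms. Your choice $c_I:=\oscpz(f_2)(x_0)$ collapses all of this: the $L^2$-boundedness absorbs $f_1$ at \emph{all} scales at once (your ${\rm I}_I$ is the paper's ${\rm G}_{11}+{\rm G}_{31}$ merged), the conservation property kills the constant exactly as in the paper's ${\rm G}_{13}$, and the full-line integral $\int_0^\fz|\partial_tP_t^{[\lz]}f_2(y)-\partial_tP_t^{[\lz]}f_2(x_0)|\,dt$ replaces the paper's ${\rm G}_{12}$, ${\rm G}_2$, ${\rm G}_{32}$ and ${\rm G}_4$ in one stroke. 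The reverse-triangle/$\ell^1\hookrightarrow\ell^\rho$ reduction and the sublinearity step are both sound, and the final dyadic summation with $|f_{2^kI,\,\lz}-f_{2I,\,\lz}|\ls k\|f\|_\bmoz$ is exactly the paper's routine.

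One correction that simplifies your plan: the H\"older-type kernel bound you flag as the principal obstacle is much easier than you anticipate, and no cancellation between competing terms or splitting of the $\theta$-integral is needed. For $y\in I$ and $z\notin 2I$ the mean value theorem gives a point $\xi\in I$ with $|\partial_tP_t^{[\lz]}(y,z)-\partial_tP_t^{[\lz]}(x_0,z)|\le|y-x_0|\,|\partial_x\partial_tP_t^{[\lz]}(\xi,z)|$, and the two bounds \eqref{p-p-t1} and \eqref{p-p-t2} of Proposition \ref{p-upp bdd poisson} iv), combined through the same two-case measure comparison the paper carries out in \eqref{upp bdd deriva meas}, yield
\[
\lf|\partial_x\partial_tP_t^{[\lz]}(\xi,z)\r|\ls\frac1{m_\lz(I(x_0,|x_0-z|))}\,\frac1{(|x_0-z|+t)^2}.
\]
Integrating in $t$ over $(0,\fz)$ produces your estimate with $\dz=1$. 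Two small points worth recording: you should check that $c_I=\oscpz(f_2)(x_0)$ is finite (immediate, since $|\partial_tP_t^{[\lz]}f_2(x_0)|\ls\|f\|_\bmoz/r$ uniformly in $t$ and the sum $\sum_j\int_{t_{j+1}}^{t_j}$ only covers $(0,t_1)$), and for $\varpz$ the reverse triangle inequality must be applied inside the supremum over sequences before passing to $\ell^1$, which your sketch implicitly does.
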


\begin{rem}\rm\label{r-heat semig}
Let $\{W^{[\lz]}_t\}_{t>0}$ be the heat semigroup associated with $\Delta_\lz$ defined by
setting, for all $f\in \bigcup_{1\le p\le\fz}\lpz$ and $x\in\rrp$,
$$W^{[\lz]}_t f(x):=e^{-t\Delta_\lz}f(x)=\int_0^\fz W^{[\lz]}_t(x, y) f(y)\,\dmz(y),$$
where
$$W^{[\lz]}_t(x, y):=\frac{2^{(1-2\lz)/2}}{\Gamma(\lz)\sqrt \pi}t^{-\lz-\frac12}\exp\lf(-\frac{x^2+y^2-2xy\cos\theta}{2t}\r)
(\sin\theta)^{2\lz-1}\,d\theta;$$
(see \cite[pp.\,200-201]{bdt}).
We remark all the conclusions of Theorems \ref{t-bdd semigroup}, \ref{t-H1 semigroup}, \ref{t-char semigroup} and \ref{t-bmo semigroup} hold
if the Poisson semigroup $\{\plz\}_{t>0}$ is replaced by the heat semigroup $\{W^{[\lz]}_t\}_{t>0}$.
\end{rem}

The organization of this paper is as follows.

 Section \ref{s3} is devoted to the proof of  Theorem \ref{t-bdd semigroup}. For this purpose,
 we first establish a basic proposition on the upper bounds of the Poisson kernel and its derivatives.
 Then we prove the $L^p(\mathbb R_+,\, \dmz)$ boundedness $(p>1)$ of $\oscpz$ and $\varpz$ by
 using the result in \cite{jr} on the oscillation and variation for symmetric diffusion semigroups
(see Lemma \ref{l-seimgroup bdd}).  Moreover, using the Calder\'on-Zygmund decomposition, the basic
proposition and the $L^p(\mathbb R_+,\, \dmz)$ boundedness $(p>1)$ of $\oscpz$ and $\varpz$,
we further show that both $\oscpz$ and $\varpz$ are of weak type (1,1).

 In Section \ref{s4},  we establish another endpoint estimation for $p=1$  of $\oscpz$ and $\varpz$. Precisely,  applying
 the atomic  decomposition and the basic proposition in Section \ref{s3},
  we prove $\oscpz$ and $\varpz$ are bounded from $H^1(\rrp,\dmz)$
 to $\loz$. As an application, by the characterization of
$\hoz$ space via the maximal operator $\cm_{P^{[\lz]}}$ of the Poisson semigroup $\{P_{t}^{[\lz]}\}_{t>0}$ in \cite{bdt},
we further establish an equivalent characterization of
$\hoz$ space in terms of ${\mathcal V}_\rho(P^{[\lz]}_\ast)$.

Based on the proposition in Section \ref{s3} and properties of ${\rm BMO}(\rrp,\,\dmz)$, we in Section \ref{s5} obtain the boundedness
of $\oscpz$ and $\varpz$  on ${\rm BMO}(\rrp,\,\dmz)$. We borrow the ideas in \cite{b14}. However, compared
with the case in \cite{b14}, since the Poisson semigroup $\{P^{[\lz]}_t\}_{t>0}$ associated with $\Delta_\lz$
has the conservation property $P^{[\lz]}_t(1)=1$, our argument is more straightfoward than that in \cite{b14}.

Throughout the paper,
we denote by $C$ {positive constants} which
is independent of the main parameters, but it may vary from line to
line. For every $p\in(1, \fz)$, we denote by $p'$ the conjugate of $p$, i.e., $1/p'+1/p=1$.
If $f\le Cg$, we then write $f\ls g$ or $g\gs f$;
and if $f \ls g\ls f$, we  write $f\sim g.$ For any $k\in \mathbb{R}_+$ and $I:= I(x, r)$ for some $x$, $r\in (0, \fz)$,
$kI:=I(x, kr)$. For any $x,\,r\in(0,\,\infty)$, if  $x<r$, then
$$I(x,\,r)=(0,\,x+r)=I\lf(\frac{x+r}{2},\,\frac{x+r}{2}\r).$$
Thus, we may assume that $x\ge r$.

%%%%%%%%%%%%%%%%%%%%%%%%%%%%%%%%%%%%%%%%%%%%%%%%%%%%%%%%%%%%%%%%%%%%%%%%%%%%%%%%%%%%%%%%%%%%%%%%%%%%%%%%%%%%%%%%%%%
%%%%%%%%%%%%%%%%%%%%%%%%%%%%%%%%%%%%%%%%%%%%%%%%%%%%%%%%%%%%%%%%%%%%%%%%%%%%%%%%%%%%%%%%%%%%%%%%%%%%%%%%%%%%%%%%%%%%%
\section{$L^p(\rrp,\dmz)$-boundedness and weak type (1,1) estimates}\label{s3}

In this section, we provide the proof of Theorem \ref{t-bdd semigroup}. To begin with, we first establish
a basic proposition on the upper bounds of the Poisson kernel and its derivatives,
which is a useful tool in this paper.

\begin{prop}\label{p-upp bdd poisson}
There exists a positive constant $C$ such that for any $x,\,y,\,t\in(0, \fz)$,
\begin{enumerate}
  \item [\rm i)]
  \begin{equation}\label{p-t1}
\lf|P_{t}^{[\lambda]}(x,y)\r|\le C\dfrac{t}{(|x-y|^2+t^2)^{\lambda+1}},
\end{equation}
and
\begin{equation}\label{p-t2}
\lf|P_{t}^{[\lambda]}(x,y)\r|\le C\dfrac{t}{(xy)^{\lambda}(|x-y|^2+t^2)}.
\end{equation}
  \item [\rm ii)] $$\lf|\partial_x\plz(x,y)\r|\le C\dfrac{t}{(|x-y|^2+t^2)^{\lz+3/2}},$$ and
$$\lf|\partial_x\plz(x,y)\r|\le C\dfrac{t}{(x y)^{\lz}(|x-y|^2+t^2)^{3/2}}.$$
  \item [\rm iii)]$$\lf|\partial_t\plz(x,y)\r|\le C\dfrac1{(|x-y|^2+t^2)^{\lz+1}},$$ and
$$\lf|\partial_t\plz(x,y)\r|\le C\dfrac1{(x y)^{\lz}(|x-y|^2+t^2)}.$$

  \item[\rm iv)]
  \begin{equation}\label{p-p-t1}
  \lf|\partial_y\partial_{t}\plz(x,y)\r|+\lf|\partial_x\partial_{t}\plz(x,y)\r|\le C\dfrac1{(|x-y|^2+t^2)^{\lz+3/2}},
  \end{equation}
   and
\begin{equation}\label{p-p-t2}
\lf|\partial_y\partial_{t}\plz(x,y)\r|+\lf|\partial_x\partial_{t}\plz(x,y)\r|\le C\dfrac1{(x y)^{\lz}(|x-y|^2+t^2)^{3/2}}.
\end{equation}
\end{enumerate}
\end{prop}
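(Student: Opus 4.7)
Plan: Work directly from the integral representation (\ref{kernel}) of $\plz(x,y)$. Setting $A := (x-y)^2 + t^2 + 4xy\sin^2(\theta/2)$, the denominator in (\ref{kernel}) equals $A^{\lz+1}$. The first ``Euclidean type'' estimate in each of (i)--(iv) will follow from the trivial bound $A \ge (x-y)^2+t^2$ together with the fact that $\int_0^\pi (\sin\theta)^{2\lz-1}\, d\theta < \infty$.

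For the second ``Bessel type'' estimate in each item, the key is the pair of integral bounds
$$I_k := \int_0^\pi \frac{(\sin\theta)^{2\lz-1}}{A^{\lz+k}}\, d\theta \ls \frac{1}{\lf((x-y)^2+t^2\r)^k (xy)^\lz}, \qquad k \in \{1,\, 3/2\}.$$
The substitution $u = 1-\cos\theta$ (with $du = \sin\theta\, d\theta$ and $(\sin\theta)^{2\lz-2} = (u(2-u))^{\lz-1}$) turns $I_k$ into $\int_0^2 (u(2-u))^{\lz-1}/(a+bu)^{\lz+k}\, du$, where $a := (x-y)^2 + t^2$ and $b := 2xy$. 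Since the naive pointwise bound $(a+bu)^{\lz+k} \ge a\,(bu)^{\lz+k-1}$ produces an integrand of order $1/u$ near the origin and hence diverges, I would split the domain at $u_1 := \min(a/b,\,1)$: on $(0,\,u_1)$ use $a+bu \ge a$, and on $(u_1,\,2)$ use $a+bu \ge bu$. A direct calculation in each piece, treated separately according as $a \le b$ or $a > b$, yields the required bound $1/(a^k b^\lz)$.

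For the derivative bounds in (ii)--(iv), I differentiate (\ref{kernel}) under the integral sign. The identities $\partial_x A = 2(x - y\cos\theta)$ and $(x - y\cos\theta)^2 \le x^2 + y^2 - 2xy\cos\theta \le A$ (and symmetrically for $y$) show that each $x$- or $y$-derivative costs a factor of $\sqrt{A}$ in the denominator. The $\partial_t$-derivative of $\plz$ produces one term proportional to $\int (\sin\theta)^{2\lz-1}/A^{\lz+1}\, d\theta$ and another proportional to $t^2 \int (\sin\theta)^{2\lz-1}/A^{\lz+2}\, d\theta$; the second is absorbed using $t^2 \le A$. Consequently, $|\partial_t \plz|$ is controlled by $I_1$, whereas $|\partial_x \plz|$, $|\partial_x \partial_t \plz|$, and $|\partial_y \partial_t \plz|$ are controlled (with an additional factor $t$ only in the case of $\partial_x \plz$) by $I_{3/2}$. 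Inserting the two integral estimates above then delivers all the inequalities in (ii)--(iv).

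The main obstacle is the ``Bessel type'' estimate for $I_k$: the integrand after substitution has a borderline-nonintegrable singularity at $u = 0$, so no single pointwise inequality on $(a+bu)^{\lz+k}$ suffices, and one must carefully combine the estimates on the two disjoint pieces of $(0,\,2)$ to recover exactly the factor $1/(a^k b^\lz)$ in the denominator.
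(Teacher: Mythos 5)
Your proposal is correct and follows the same basic route as the paper: both arguments work directly from the explicit integral representation \eqref{kernel}, get the ``Euclidean'' bounds from $A\ge (x-y)^2+t^2$ and \eqref{sin-t}, and reduce all the derivative estimates to the two master integrals $I_1$ and $I_{3/2}$ after observing that each $x$- or $y$-derivative costs $\sqrt{A}$ and that the extra $t^2$ from $\partial_t$ is absorbed by $t^2\le A$. The only real difference is the technical device for the Bessel-type bound: the paper keeps the variable $\theta$, uses $\sin\theta\sim\theta$ and $1-\cos\theta\gs\theta^2$ on $[0,\pi/2]$, and rescales $\beta\propto\theta\sqrt{xy}/\sqrt{a}$ to land on the single convergent integral $\int_0^\fz \beta^{2\lz-1}(1+\beta^2)^{-\lz-k}\,d\beta$, which avoids your case analysis entirely; your substitution $u=1-\cos\theta$ with a split at $u_1=\min(a/b,1)$ is an equivalent elementary alternative. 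One small repair is needed in your splitting: in the regime $a>b$ the prescription ``use $a+bu\ge bu$ on $(u_1,2)=(1,2)$'' only yields $b^{-\lz-k}$, which exceeds the target $a^{-k}b^{-\lz}$; in that regime you should instead use $a+bu\ge a$ on all of $(0,2)$, giving $a^{-\lz-k}\le a^{-k}b^{-\lz}$. With that one-line adjustment (and the observation that $(u(2-u))^{\lz-1}\sim u^{\lz-1}$ for $u\le 1$ and is integrable near $u=2$), your argument goes through for both $k=1$ and $k=3/2$ and delivers all of i)--iv).
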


\begin{proof}
We first show {\rm i)}.
By \eqref{kernel} and the fact
\begin{equation}\label{sin-t}
\dint_{0}^{\pi}(\sin\theta)^{2\lz-1}d\theta=\dfrac{\Gamma(\lz)\sqrt{\pi}}{\Gamma(\lz+1/2)},
\end{equation}
it is easy to see  \eqref{p-t1} holds.  On the other hand,
by the fact that for $\theta\in [0,\,\pi/2]$, $\sin\theta\sim\theta$ and $1-\cos \theta \geq 2(\theta/\pi)^2$, we have
\begin{eqnarray*}
\lf|P_{t}^{[\lambda]}(x,y)\r|&&\ls\dint_{0}^{\pi/2}\dfrac{t(\sin\theta)^{2\lz-1}}{[|x-y|^2+t^2+2xy(1-\cos\theta)]^{\lz+1}}d\theta\\
&&\ls\dint_{0}^{\pi/2}\dfrac{t\theta^{2\lz-1}}{[|x-y|^2+t^2+4xy\theta^2/{\pi^{2}}]^{\lz+1}}d\theta\\
&&\lesssim \dfrac{t}{(xy)^{\lz}(|x-y|^2+ t^2)}\dint_{0}^{\infty}\frac{\beta^{2\lz-1}}{(1+\beta^2)^{\lz+1}}\,d\beta\\
 &&\lesssim\dfrac{t}{(x y)^{\lz}(|x-y|^2+t^2)}.
\end{eqnarray*}
This implies \eqref{p-t2} and hence {\rm i)}.

Observe that
\begin{eqnarray*}
\lf|\partial_t\plz(x,y)\r|\ls\dint_{0}^{\pi}\dfrac{(\sin\theta)^{2\lz-1}}{(x^2+y^2+t^2-2xy\cos\theta)^{\lz+1}}\,d\theta,
\end{eqnarray*}
 \begin{equation*}
\lf|\partial_x\plz(x,y)\r|\lesssim \dint_{0}^{\pi}\dfrac{t(\sin\theta)^{2\lz-1}}{(x^2+y^2+t^2-2xy\cos\theta)^{\lz+3/2}}\,d\theta,
 \end{equation*}
 and
\begin{equation*}
\lf|\partial_x\partial_{t}\plz(x,y)\r|+\lf|\partial_y\partial_{t}\plz(x,y)\r|\lesssim
\dint_{0}^{\pi}\dfrac{(\sin\theta)^{2\lz-1}}{(x^2+y^2+t^2-2xy\cos\theta)^{\lz+3/2}}\,d\theta.
\end{equation*}
Then using a similar argument, we see that {\rm ii)}-{\rm iv)} hold. This finishes the proof of Proposition \ref{p-upp bdd poisson}.
\end{proof}

Before we present the proof of Theorem \ref{t-bdd semigroup}, we also need two auxiliary lemmas
which were established in \cite{jr} and \cite{cw71}.

\begin{lem}\label{l-seimgroup bdd}
Let $(\Sigma, d\mu)$ be a positive measure space and $T_\ast=\{T_t\}_{t>0}$
a symmetric diffusion semigroup satisfying that $T_tT_s=T_sT_t$ for any $t,\,s\in(0, \fz)$,
$T_0=\mathfrak{I}$ the identity operator, $\lim_{t\to0}T_tf=f$ in $L^2(\Sigma,\,d\mu)$ and
\begin{itemize}
  \item [${\rm (T_i)}$]$\|T_tf\|_{L^p(\Sigma,\,d\mu)}\le \|f\|_{L^p(\Sigma,\,d\mu)}$ for all $p\in[1, \fz]$ and $t\in(0,\,\fz)$;
  \item [${\rm(T_{ii})}$]$T_t$ is self-adjoint on $L^2(\Sigma,\,d\mu)$ for all $t\in(0,\, \fz)$;
  \item[${\rm (T_{iii})}$] $T_t f\ge0$ for all $f\ge0$ and $t\in(0, \fz)$;
  \item [${\rm (T_{iv})}$] $T_t(1)=1$ for all $t\in (0,\,\fz)$.
\end{itemize}
Then the operators ${\mathcal V}_\rho(T_\ast)$ and ${\mathcal O}(T_\ast)$ defined
in \eqref{d-var} and \eqref{d-oci} are both bounded on $L^p(\Sigma,\,d\mu)$
for any $p\in(1, \fz)$.
\end{lem}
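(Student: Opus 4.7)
The plan is to reduce the boundedness of $\mathcal V_\rho(T_\ast)$ and $\mathcal O(T_\ast)$ on $L^p(\Sigma,\,d\mu)$ to an analogous inequality for backward martingales via the Rota--Stein dilation theorem, and then to invoke L\'epingle's martingale variation inequality. This is the classical strategy of Jones and Reinhold; the hypotheses $(T_i)$--$(T_{iv})$ are tailored precisely to make this transfer legitimate.

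First, I would use $(T_i)$--$(T_{iv})$ to put $\{T_t\}_{t>0}$ in the framework of Rota's dilation theorem. Self-adjointness $(T_{ii})$ encodes reversibility, positivity $(T_{iii})$ and conservation $(T_{iv})$ give the Markov/stochastic structure, and contractivity $(T_i)$ ensures the embedding behaves well on every $\lpz$-level. Together these yield an enlarged measure space $(\widetilde\Sigma,\,d\widetilde\mu)$, an isometric embedding $\iota\colon L^p(\Sigma,\,d\mu)\hookrightarrow L^p(\widetilde\Sigma,\,d\widetilde\mu)$ for every $p\in[1,\fz]$, a decreasing filtration $\{\mathcal F_t\}_{t\ge 0}$, and a contractive projection $\mathbb E_0$ onto $\iota(L^p(\Sigma,\,d\mu))$, such that
$$T_{2t}f\;=\;\iota^{-1}\,\mathbb E_0\bigl[\,E(\iota f\mid \mathcal F_t)\,\bigr],\qquad f\in L^p(\Sigma,\,d\mu),\ t\ge 0.$$
Thus $T_{2t}f$ is the projection back to $\Sigma$ of a reversed martingale evaluated at time $t$.

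Second, I would invoke L\'epingle's inequality on the enlarged space: for every $p\in(1,\fz)$ and every $\rho>2$, any $L^p$-bounded martingale $\{M_t\}_{t\ge 0}$ satisfies
$$\lf\|\sup_{t_j\searrow 0}\lf(\sum_{j=1}^\fz|M_{t_{j+1}}-M_{t_j}|^\rho\r)^{1/\rho}\r\|_{L^p(\widetilde\Sigma)}\;\ls\;\sup_{t\ge0}\|M_t\|_{L^p(\widetilde\Sigma)},$$
with the analogous bound for the oscillation operator $\mathcal O$ associated with any prescribed decreasing sequence. Applied to the martingale $M_t:=E(\iota f\mid \mathcal F_t)$, this gives the desired variation and oscillation estimates on the enlarged space.

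Third, I would transport the estimate back to $\Sigma$. Since $\mathbb E_0$ and $\iota^{-1}$ are $L^p$-contractions and both $\mathcal V_\rho$ and $\mathcal O$ are sublinear functionals of the difference sequence, the pointwise domination
$$\mathcal V_\rho(T_{2\cdot}f)(x)\;\le\;\iota^{-1}\mathbb E_0\bigl[\mathcal V_\rho(M_\cdot)\bigr](x)$$
and its oscillation analogue hold almost everywhere on $\Sigma$; composing with step two yields the bounds for the subsemigroup $\{T_{2t}\}_{t>0}$. A telescoping argument using $T_t=T_{t/2}T_{t/2}$ and $(T_i)$ then transfers the inequality from $\{T_{2t}\}_{t>0}$ to $\{T_t\}_{t>0}$. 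The main obstacle is the first step: constructing the Rota dilation so as to marry a continuous parameter semigroup with a genuine backward martingale requires handling several technicalities (measurability across the enlarged filtration, validity at every $p\in(1,\fz)$, and the continuous-parameter formulation of L\'epingle's inequality). Once the dilation is in hand, the remainder reduces to contractivity together with the classical martingale variation inequality.
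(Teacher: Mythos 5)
The paper offers no proof of this lemma at all: it is imported verbatim from Jones and Reinhold \cite{jr} (the text introduces it as one of ``two auxiliary lemmas which were established in \cite{jr} and \cite{cw71}''), so there is no internal argument to compare against. Your outline --- Rota's dilation of the symmetric diffusion semigroup to a reverse martingale, L\'epingle's variation inequality on the enlarged space, and transfer back through the contractive projection --- is precisely the standard strategy of the cited reference and is correct in substance; the only small remarks are that passing from $\{T_{2t}\}_{t>0}$ to $\{T_t\}_{t>0}$ is an immediate reparametrization of the time variable (no telescoping via $T_t=T_{t/2}T_{t/2}$ is needed, since the variation and oscillation functionals are invariant under rescaling the index set), and the continuous-parameter formulation of the dilation that you flag as the main obstacle is indeed the genuine technical crux one must take from the literature.
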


%\begin{lem}\label{Whiteney-lemma}(\rm{\bf{Whitney type covering Lemma}})
%Suppose $(X,\mu, d)$ is a space of homogenous type, where $X$ a topological space, $\mu$ a Borel measure and $d$ a quasi-metric(or quasi-distance). $U\subsetneq X$ is an open bounded set and $C\geq 1$. Then there exists a sequence of spheres $\{S_j\}=\{S_{s_j}(x_j)\}$ satisfying
%\begin{itemize}
%  \item [${\rm (i)}$] $U=\bigcup_jS_j$;
%  \item [${\rm({ii})}$] there exists a constant $M$ such that no point of $X$ belongs to more than $M$ of the spheres $\bar{S}_j\equiv S_{Cs_j}(x_j)$;
%  \item[${\rm ({iii})}$] $\bar{\bar{S}}_j\cap(X-U)\neq\emptyset$ for each $j$, where $\bar{\bar{S}}_j=S_{3KCs_j}(x_j)$
%and $K$ is a constant occurring in the "triangle inequality" satisfied by the quasi-distance $d$.
%  \end{itemize}
%\end{lem}
%Property ${\rm ({ii})}$ be referred to as the $M$-disjointness of the collection $\{\bar{S}_j\}$.
It is straightforward from the definition of  $m_\lambda$ (i.e., $dm_\lambda(x):=x^{2\lambda}dx$)  that
 there exists a finite constant $C > 1$ such that for all $x,\,r\in\mathbb{R}_+$,
\begin{equation*}\label{volume property-1}
 C^{-1}m_\lz(I(x, r))\le x^{2\lz}r+r^{2\lz+1}\le C m_\lz(I(x, r)).
\end{equation*}
%Moreover, it was shown in \cite{dlmwy} that for any interval $I\subset\mathbb{R}_{+}$,
%\begin{equation*}\label{reverse doubl}
%\min\lf(2, 2^{2\lz}\r)m_\lz(I)\le m_\lz(2I)\le 2^{2\lz+1}m_\lz(I),
%\end{equation*}
This means that $(\mathbb{R}_+, |\cdot|, dm_\lz)$ is a space of homogeneous type
in the sense of \cite{cw71,cw77}.

The following Calder\'on-Zygmund decomposition was established in \cite[pp.\,73-74]{cw71} in the setting of spaces of homogeneous type.

\begin{lem}\label{l-cz} Let  $f\in\loz$ and
$\eta>0$, there exist a family
of intervals $\{I_j\}_{j}$, and constants $C>0$ and $M\ge 1$, such
that
\begin{itemize}
\item [\rm (i)] $f=:g+b=:g+\sum_jb_j$, where $b_j$ is supported in $I_j$,
\item [\rm (ii)] $\|g\|_\linz\le C\eta$ and $\|g\|_\loz\le C\|f\|_\loz$,
\item [\rm (iii)] $\sum_j\|b_j\|_\loz\le C\|f\|_\loz$ and $\int_{I_j} b_j(x)\,\dmz(x)=0$
for each $j$,
\item [\rm (iv)] $\sum_jm_\lz(I_j)\le \frac C\eta\|f\|_\loz$,
\item [\rm (v)] for any $x\in\rrp$, $\sum_j\chi_{I_j}(x)\le M$.
\end{itemize}
\end{lem}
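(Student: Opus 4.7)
The plan is to carry out the standard Calder\'on--Zygmund stopping-time construction in the space of homogeneous type $(\rrp,|\cdot|,dm_\lz)$ identified in the paragraph preceding Lemma \ref{l-cz}. Let $M_\lz$ denote the (uncentered) Hardy--Littlewood maximal operator on $(\rrp, dm_\lz)$; the Vitali covering lemma in a doubling space yields that $M_\lz$ is of weak type $(1,1)$. Form the level set $\Omega := \{x \in \rrp : M_\lz f(x) > \eta\}$, which is open in $\rrp$ and satisfies $m_\lz(\Omega) \le C\|f\|_\loz/\eta$.

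Next I would apply a Whitney-type covering to $\Omega$: there is a family $\{I_j\}$ of intervals with centers $x_j \in \Omega$ and radii $r_j \sim \dist(x_j, \rrp\setminus\Omega)$ such that $\bigcup_j I_j = \Omega$, $\sum_j \chi_{I_j} \le M$ pointwise, and for some fixed $\kappa > 1$ each dilate $\kappa I_j$ meets $\rrp\setminus\Omega$. This Whitney step, due to Coifman--Weiss, proceeds via a greedy Vitali-type selection that exploits the doubling property of $dm_\lz$; it delivers (v) at once, and (iv) follows since $\sum_j m_\lz(I_j) \le M\,m_\lz(\Omega) \le CM\|f\|_\loz/\eta$. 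Choose a partition of unity $\{\vz_j\}$ with $0\le\vz_j\le\chi_{I_j}$ and $\sum_j\vz_j = \chi_\Omega$, and set
\begin{equation*}
c_j := \frac{\int f\vz_j\,dm_\lz}{\int\vz_j\,dm_\lz}, \qquad b_j := (f-c_j)\vz_j, \qquad g := f\chi_{\rrp\setminus\Omega} + \sum_j c_j\vz_j.
\end{equation*}
Then (i) and the cancellation $\int b_j\,dm_\lz = 0$ in (iii) are automatic from the definitions, while $\|b_j\|_\loz \ls \int_{I_j}|f|\,dm_\lz$ combined with (v) yields $\sum_j\|b_j\|_\loz \le C\|f\|_\loz$.

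The delicate point is (ii). On $\rrp\setminus\Omega$ the Lebesgue differentiation theorem in doubling spaces gives $|f|\le M_\lz f\le\eta$ a.e., so $|g|\le\eta$ there. On $\Omega$, for each $j$ pick $y_j\in \kappa I_j\setminus\Omega$; doubling of $m_\lz$ then produces
\begin{equation*}
|c_j|\ls \frac{1}{m_\lz(I_j)}\int_{I_j}|f|\,dm_\lz \ls \frac{1}{m_\lz(\kappa I_j)}\int_{\kappa I_j}|f|\,dm_\lz \le M_\lz f(y_j)\le\eta,
\end{equation*}
so $|g|\le C\eta$ on $\Omega$ as well, and $\|g\|_\loz\le\|f\|_\loz + \sum_j\|b_j\|_\loz\le C\|f\|_\loz$ is immediate. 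The main obstacle is the Whitney step itself: one must arrange simultaneously that every $\kappa I_j$ hits $\rrp\setminus\Omega$ (which is what makes the above $|c_j|\le\eta$ estimate work) and that the overlap multiplicity $M$ is finite, all while covering $\Omega$. Once the covering is in hand, doubling of $dm_\lz$ reduces all remaining verifications to routine bookkeeping.
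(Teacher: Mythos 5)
The paper does not prove this lemma at all: it simply cites Coifman--Weiss \cite[pp.\,73--74]{cw71}, and your argument is precisely the standard construction given there (maximal-function level set, Whitney covering with bounded overlap in a space of homogeneous type, partition of unity, and the averages $c_j$ controlled via a point of $\kappa I_j\setminus\Omega$). Your reconstruction is correct in all essentials, so there is nothing to compare beyond noting that the authors outsource the proof to the reference.
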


\begin{proof}[Proof of Theorem \ref{t-bdd semigroup}]
We first claim that  $\{\plz\}_{t>0}$ satisfies the conditions in Lemma \ref{l-seimgroup bdd}.
In fact,  $\{P^{[\lz]}_t\}_{t>0}$ is a contraction semigroup on $\lrz$
for all $r\in[1, \fz]$; see \cite[p.\,197]{bdt}. Thus ${\rm(T_i)}$ holds.
Next, Since $\Delta_\lz$ is a self-adjoint  operator, then
 $\{P^{[\lz]}_t\}_{t>0}$ satisfies ${\rm(T_{ii})}$.
 Third, recall that for any $t,\,x,\,y\in(0, \fz)$,
the kernel $P^{[\lz]}_t(x, y)$ of $P^{[\lz]}_t$ satisfies \eqref{kernel}, which implies
${\rm(T_{iii})}$.  Moreover, by \cite[p.\,208]{bdt}, the conservation property ${\rm(T_{iv})}$ holds
for $\{P^{[\lz]}_t\}_{t>0}$.
According to Lemma \ref{l-seimgroup bdd}, we conclude that $\oscpz$
and $\varpz$ are bounded on $L^p(\rrp,\dmz)$ for any $p\in(1,\infty)$.

 We next establish the weak (1,1) estimation.
We only give the proof of $\oscpz$ and the proof of $\varpz$ is similar.
For any $f\in\loz$ and $\eta\in(0, \fz)$. By Lemma \ref{l-cz}, we have functions $g, b$, $b_j$
and intervals $\{I_j\}_j$ as in Lemma \ref{l-cz}.

Since the operator $\oscpz$  is subadditive, to show
$$m_{\lz}\lf(\lf\{x\in\rrp: {\mathcal O\lf(P^{[\lz]}_\ast\r)} f(x)>\eta\r\}\r)\leq \frac{C}{\eta}\|f\|_{\loz},$$
it suffices to prove
 \begin{equation}{\label{rg}}
m_{\lz}\Big(\Big\{x\in\rrp: \lf|\boscpz(g)(x)\r|>\frac{\eta}{2}\Big\}\Big)\leq \frac{C}{\eta}\|f\|_{\loz},
\end{equation}
 and
\begin{equation}\label{rb}
m_{\lz}\Big(\Big\{x\in\rrp: \lf|\boscpz(b)(x)\r|>\frac{\eta}{2}\Big\}\Big)\leq \frac{C}{\eta}\|f\|_{\loz}.
\end{equation}
For \eqref{rg}, by the $L^{2}$-boundedness of $\oscpz$ and Lemma \ref{l-cz} {\rm (ii)}, we have
\begin{eqnarray*}
m_{\lz}\Big(\Big\{x\in\rrp: \lf|\boscpz(g)(x)\r|>\dfrac{\eta}{2}\Big\}\Big)
&&\lesssim \frac{1}{{\eta}^2}\dint_{\rrp}\lf|g(x)\r|^{2}\,\dmz(x)\noz\\
&&\lesssim\frac{1}{{\eta}^2}\lf\|g\r\|_{L^{\infty}(\rrp,\,\dmz)}\dint_{\rrp}\lf|g(x)\r|\,\dmz(x)\noz\\
&&\lesssim \frac{1}{{\eta}} \dint_{\rrp}|f(x)|\,\dmz(x).
\end{eqnarray*}
This shows \eqref{rg}.

Now, we prove \eqref{rb}. Let $\tilde{I_{j}}:= 3I_j$ and $\mathcal{\widetilde{I}}:=\bigcup_{j}\tilde{I_{j}}$.
Using the doubling property of $m_{\lz}$ and Lemma \ref{l-cz} {\rm (iv)}, we write
\begin{eqnarray*}
&&m_{\lz}\lf(\lf\{x\in\rrp:\,\lf|\boscpz(b)(x)\r|>\dfrac{\eta}{2}\r\}\r)\\
&&\quad\lesssim m_{\lz}\lf(\widetilde{\mathcal{I}}\r)+m_{\lz}
\lf(\lf\{x\in\rr_{+}\backslash\widetilde{\mathcal{I}}:\,\lf|\boscpz(b)(x)\r|>\dfrac{\eta}{2}\r\}\r)\\
&&\quad\lesssim\frac1\eta\|f\|_\loz+m_\lz
\lf(\lf\{x\in\rr_{+}\backslash\mathcal{\widetilde{I}}:\lf|\boscpz(b)(x)\r|>\frac{\eta}{2}\r\}\r).
\end{eqnarray*}
It remains to estimate the last term. For each $j$, by the fundamental theorem of calculus, we now analyze the operator
\begin{eqnarray*}
\boscpz(b_j)(x)&&=\sum_{k=1}^{\infty}\lf(\sup_{t_{k+1}\le\varepsilon_{k+1}<\varepsilon_{k}
\le t_{k}}\lf|P_{\varepsilon_{k+1}}^{[\lambda]}b_j(x)-P_{\varepsilon_k}^{[\lambda]}b_j(x)\r|^2\r)^{1/2}\\
&&\le\sum_{k=1}^{\infty}\lf[\sup_{t_{k+1}\le\varepsilon_{k+1}<\varepsilon_{k}\le t_{k}}
\lf|\int_{\rrp}\lf(P_{\varepsilon_{k+1}}^{[\lambda]}(x,y)-P_{\varepsilon_k}^{[\lambda]}(x,y)\r)b_j(y)\,\dmz(y)\r|\r]\\
&&=\sum_{k=1}^{\infty}\lf[\sup_{t_{k+1}\le\varepsilon_{k+1}<\varepsilon_{k}\le t_{k}}
\lf|\int_{\rrp}\int_{\varepsilon_{k+1}}^{\varepsilon_{k}}\partial_{t}P_{t}^{[\lambda]}(x,y)b_j(y)\,dt\dmz(y)\r|\r].
\end{eqnarray*}
By (i) and (iii) of Lemma \ref{l-cz}, we have
\begin{eqnarray*}
\int_{\rrp}\partial_{t}P_{t}^{[\lambda]}(x,y)b_j(y)\dmz(y)=\int_{\rrp}\lf[\partial_tP_{t}^{[\lambda]}(x,y)-\partial_tP_{t}^{[\lambda]}(x,y_j)\r]b_j(y)\,\dmz(y),
\end{eqnarray*}
where $y_j$ is the center of $I_j$.
This yields that
\begin{eqnarray*}
\boscpz(b_j)(x)\le\int_{0}^{\infty}\int_{\rrp}\lf|\partial_tP_{t}^{[\lambda]}(x,y)-\partial_tP_{t}^{[\lambda]}(x,y_j)\r||b_j(y)|\,\dmz(y)\,dt.
\end{eqnarray*}
Then applying Lemma \ref{l-cz} (i) and the mean value theorem, we write
\begin{eqnarray*}
&&m_{\lz}\lf(\lf\{x\in\rr_{+}\backslash\,\mathcal{\widetilde{I}}:\lf|\boscpz(b)(x)\r|>\frac{\eta}{2}\r\}\r)\\
&&\quad\lesssim \frac{1}{\eta}\int_{\rrp\backslash\,\mathcal{\widetilde{I}}}\lf|\boscpz(b)(x)\r|\,\dmz(x)\\
&&\quad\lesssim\frac{1}{\eta}\sum_j\int_{\rrp\backslash\,\tilde{I_j}}\int_0^{\infty}
\int_{\rrp}\lf|\partial_tP_{t}^{[\lambda]}(x,y)-\partial_tP_{t}^{[\lambda]}(x,y_j)\r|\lf|b_j(y)\r|\,\dmz(y)\,dt\,\dmz(x)\\
&&\quad\lesssim\frac{1}{\eta}\sum_j\int_{\rrp\backslash\,\tilde{I_j}}\int_0^{\infty}
\int_{\rrp}\lf|\partial_y\partial_tP_{t}^{[\lambda]}(x,\xi)\r|\lf|y-y_j\r|\lf|b_j(y)\r|\,\dmz(y)\,dt\,\dmz(x)\\
&&\quad\lesssim\frac{1}{\eta}\sum_j\int_{\rrp\backslash\,\tilde{I_j}}\int_0^{|I_j|}
\int_{\rrp}\lf|\partial_y\partial_tP_{t}^{[\lambda]}(x,\xi)\r|\lf|y-y_j\r|\lf|b_j(y)\r|\,\dmz(y)\,dt\,\dmz(x)\\
&&\quad\qquad+\frac{1}{\eta}\sum_j\int_{\rrp\backslash\,\tilde{I_j}}
\int_{|I_j|}^{\infty}\int_{\rrp}\lf|\partial_y\partial_tP_{t}^{[\lambda]}(x,\xi)\r|\lf|y-y_j\r|\lf|b_j(y)\r|\,\dmz(y)\,dt\,\dmz(x)\\
&&\quad=:{\rm T}_1+{\rm T}_2,
\end{eqnarray*}
 where for $y\in I_j$, $\xi:=sy+(1-s)y_j$ for certain $s\in(0, 1)$.

To estimate ${\rm T}_1$, we first claim that
\begin{equation}\label{upp bdd deriva meas}
\lf|\partial_y\partial_{t}\plz(x,\xi)\r|\ls \frac1{m_\lz(I(y_j, |x-y_j|))}\frac1{(|x-y_j|+t)^2}.
\end{equation}
In fact, for any $x\in\rrp\setminus \tilde I_j$ and $y,\xi\in I_j$, we have
$$|x-y|\sim|x-y_j|\sim |x-\xi|$$
and
\begin{align}\label{equiv meas}
m_{\lz}(I(y_j,|x-y_j|))\sim m_{\lz}(I(x,|x-y_j|))\sim m_{\lz}(I(x,|x-y|)).
\end{align}
We further consider the following two cases:

Case 1: $x\le 2|x-y|$. In this case,
\begin{equation*}
m_{\lz}(I(x,|x-y|))\sim |x-y|^{2\lz+1}.
\end{equation*}
By Proposition \ref{p-upp bdd poisson} {\rm iv)} and \eqref{equiv meas}, we see that
\begin{eqnarray*}
\lf|\partial_y\partial_{t}\plz(x,\xi)\r|&\ls&\frac1{(|x-\xi|^2+t^2)^{\lz+3/2}}\\
&\lesssim&\frac1{m_\lz(I(x, |x-y|))}\frac1{(|x-y|+t)^2}\\
&\sim&\frac1{m_\lz(I(y_j, |x-y_j|))}\frac1{(|x-y_j|+t)^2}.
\end{eqnarray*}

Case 2: $x\geq 2|x-y|$. In this case, $m_{\lz}(I(x,|x-y|))\sim x^{2\lz}|x-y|$.
Since $x\notin\tilde{I}_j$ and $y\in I_j$, $x\sim y\sim\xi$, we have that by \eqref{equiv meas},
$$m_\lz(I(y_j,|x-y_j|))\sim m_\lz(I(x,|x-y|))\sim (x\xi)^{\lz}|x-y|.$$
Thus, applying \eqref{p-p-t2}, we conclude that
\begin{eqnarray*}
\lf|\partial_y\partial_{t}\plz(x,\xi)\r|&\ls&\frac1{(x\xi)^{\lambda}(|x-\xi|+t)^{3}}\\
&\ls&\frac1{m_\lz(I(x, |x-y|))}\frac1{(|x-y|+t)^2}\\
&\sim&\frac1{m_\lz(I(y_j, |x-y_j|))}\frac1{(|x-y_j|+t)^2}.
\end{eqnarray*}
Combining the two cases above, we conclude that \eqref{upp bdd deriva meas} holds.

For $x\notin \tilde{I_j}$ and $y\in I_j$,
by \eqref{upp bdd deriva meas} and Lemma \ref{l-cz} (iii), we have  that
\begin{eqnarray*}
{\rm T}_1&&\lesssim \frac{1}{\eta}\sum_j\sum_{k=1}^{\infty}\int_{3^{k+1}I_j\backslash\,3^kI_j}\int_{0}^{|I_j|}
\int_{\rrp}|I_j|\lf|\partial_y\partial_tP_{t}^{[\lambda]}(x,\xi)\r||b_j(y)|\,\dmz(y)\,dt\,\dmz(x)\\
%&&\lesssim\frac{1}{\eta}\sum_j\sum_{k=1}^{\infty}\int_{3^{k+1}I_j\backslash\,3^kI_j}\int_{\rrp}\frac{|I_j|^2}{|x-y|^{2\lz+3}}|b_j(y)|\,\dmz(y)\,\dmz(x)\\
&&\lesssim\frac{1}{\eta}\sum_j\sum_{k=1}^{\infty}\int_{3^{k+1}I_j\backslash\,3^kI_j}\int_{\rrp}\frac{|I_j|^2}{m_{\lz}(I(y_j,|x-y_j|))|x-y_j|^2}|b_j(y)|\,\dmz(y)\,\dmz(x)\\
&&\lesssim\frac{1}{\eta}\sum_j\int_{\rrp}|b_j(y)|\dmz(y)\sum_{k=1}^{\infty}\frac{|I_j|^2}{(3^k|I_j|)^2}
\dfrac{m_{\lz}(3^{k+1}I_j)}{m_{\lz}(3^kI_j)}\\
&&\lesssim \frac{1}{\eta}\sum_j\int_{\rrp}|b_j(y)|\,\dmz(y)\\
&&\lesssim \frac{1}{\eta}\|f\|_{\loz}.
\end{eqnarray*}

Now we estimate ${\rm T}_2$. By \eqref{upp bdd deriva meas}  and Lemma \ref{l-cz} (iii) again, we have
\begin{eqnarray*}
{\rm T}_2&&\lesssim \frac{1}{\eta}\sum_j\sum_{k=1}^{\infty}\int_{3^{k+1}I_j\backslash\,3^kI_j}
\int_{|I_j|}^{\infty}\int_{\rrp}\frac1{m_\lz(I(y_j, |x-y_j|))}\frac{|I_j|}{(|x-y_j|+t)^2}|b_j(y)|\,\dmz(y)\,dt\,\dmz(x)\\
&&\lesssim\frac{1}{\eta}\sum_j\sum_{k=1}^{\infty}\int_{3^{k+1}I_j\backslash\,3^kI_j}
\int_{\rrp}\frac1{m_\lz(I(y_j, |x-y_j|))}\frac{|I_j|}{(|x-y_j|+|I_j|)}|b_j(y)|\,\dmz(y)\,\dmz(x)\\
&&\lesssim\frac{1}{\eta}\sum_j\int_{\rrp}|b_j(y)|\,\dmz(y)\sum_{k=1}^{\infty}
\frac{|I_j|m_{\lz}(3^{k+1}I_j)}{m_{\lz}(3^kI_j)3^k|I_j|}\\
&&\lesssim\frac{1}{\eta}\|f\|_{\loz}.
\end{eqnarray*}
Consequently, we obtain ${\rm T}_2\lesssim \frac{1}{\eta}\|f\|_{\loz}$. Therefore, we have
\begin{eqnarray*}
m_{\lz}\Big(\Big\{x\in\rr_{+}\backslash \widetilde{\mathcal{I}}:\lf|{\mathcal O\lf(P^{[\lz]}_\ast\r)}(b)(x)\r|>\frac{\eta}{2}\Big\}\Big)
\lesssim \frac{1}{\eta}\|f\|_{\loz}.
\end{eqnarray*}
This finishes the proof of Theorem \ref{t-bdd semigroup}.
\end{proof}

%%%%%%%%%%%%%%%%%%%%%%%%%%%%%%%%%%%%%%%%%%%%%%%%%%%%%%%%%%%%%%%%%%%%%%%%%%%%%%%%%%%%%%%%%%%%%%%%%%%%%%%%%%%%%%
%%%%%%%%%%%%%%%%%%%%%%%%%%%%%%%%%%%%%%%%%%%%%%%%%%%%%%%%%%%%%%%%%%%%%%%%%%%%%%%%%%%%%%%%%%%%%%%%%%%%%%%%%%%%%%%

\section{$(\hoz, \loz)$-boundedness}\label{s4}

In this section, we present the proofs of Theorems \ref{t-H1 semigroup} and \ref{t-char semigroup}.
The proof for $\varpz$ is completely analogous to that of $\oscpz$ in Theorem \ref{t-H1 semigroup}.
Hence, we only provide the version for $\oscpz$ here.

\begin{proof}[Proof of Theorem \ref{t-H1 semigroup}]
Assume that $f\in \hoz$. Then we have that $f=\sum \alpha_ka_k$, where $a_k$ is
an $\hoz$-atom such that $\sum_k|\alpha_k|<\infty$ and there exists an open bounded interval $I_k=I(x_k, r)\subset\rrp$ such
that $\supp(a_k)\subset I$, $\|a_k\|_{L^{\infty}(\rrp,\,\dmz)}\le [m_{\lz}(I_k)]^{-1}$ and $\int_\rrp a_k(x)\dmz(x)=0$.
Since $\hoz\subset \loz$ and  $\plz$ is bounded on $\loz$ for each $t\in(0, \fz)$, we have $\plz f=\sum_{k}\alpha_k\plz (a_k)$.
Moreover, we write
\begin{eqnarray*}
\boscpz f(x)&&=\lf[\sum_{j=1}^{\infty}\lf(\sup_{t_{j+1}
\le\varepsilon_{j+1}<\varepsilon_{j}\le t_{j}}\lf|\sum_k\alpha_kP_{\varepsilon_j}^{[\lz]}
(a_k)(x)-\sum_k\alpha_kP_{\varepsilon_{j+1}}^{[\lz]}(a_k)(x)\r|\r)^2\r]^{1/2}\\
&&\le \lf[\sum_{j=1}^{\infty}\lf(\sum_k|\alpha_k|\sup_{t_{j+1}
\le\varepsilon_{j+1}<\varepsilon_{j}\le t_{j}}\lf|P_{\varepsilon_j}^{[\lz]}(a_k)(x)-P_{\varepsilon_{j+1}}^{[\lz]}(a_k)(x)\r|\r)^2\r]^{1/2}.
\end{eqnarray*}

Applying Minkowski's inequality for series, we have
\begin{eqnarray*}
\boscpz f(x)&&\le \sum_k|\alpha_k|\lf[\sum_{j=1}^{\infty}\sup_{t_{j+1}\le
\varepsilon_{j+1}<\varepsilon_{j}\le t_{j}}\lf|P_{\varepsilon_j}^{[\lz]}a_k(x)-P_{\varepsilon_{j+1}}^{[\lz]}a_k(x)\r|^2\r]^{1/2}\\
&&=\sum_k|\alpha_k|\boscpz (a_k)(x).
\end{eqnarray*}
Therefore,
\begin{eqnarray*}
&&\int_{\rrp}\boscpz f(x)\dmz(x)\\
&&\quad\le \sum_{k}|\alpha_k|\dint_{\rrp}\boscpz(a_k)(x)\,\dmz(x)\\
&&\quad\le\sum_{k}|\alpha_k|\lf[\dint_{2I_k}\boscpz (a_k)(x)\dmz(x)+\dint_{\rrp\backslash 2I_k}\boscpz(a_k)(x)\,\dmz(x)\r]\\
&&\quad=:\sum_k|\alpha_k|({\rm E}_{k,1}+{\rm E}_{k,2}).
\end{eqnarray*}
By H\"older's inequality, $\|a_k\|_{L^{\infty}(\rrp,\,\dmz)}\le [m_{\lz}(I_k)]^{-1}$ and the $\ltz$-boundedness
property of $\oscpz$, we obtain
\begin{eqnarray*}
{\rm E}_{k,1}&&\le \lf\{\int_{2I_k}\lf[\mathcal O\lf(P^{[\lz]}_\ast\r)(a_k)(x)\r]^2\,\dmz(x)\r\}^{1/2}\lf[m_{\lz}(2I_k)\r]^{1/2}\\
&&\lesssim\lf[\int_{I_k}|a_k(x)|^2\dmz(x)\r]^{1/2}\lf[m_{\lz}(I_k)\r]^{1/2}\\
&&\lesssim \lf[m_{\lz}(I_k)\r]^{-1/2}\lf[m_{\lz}(I_k)\r]^{1/2}\sim 1.
\end{eqnarray*}
For any $ x\notin 2I_{k}$, we see that
\begin{eqnarray}\label{osci-pois-atom}
\boscpz(a_k)(x)&&\le\sum_{j=1}^{\infty}\sup_{t_{j+1}\le\varepsilon_{j+1}<\varepsilon_{j}\le t_{j}}\lf|P_{\varepsilon_j}^{[\lz]}a_k(x)
-P_{\varepsilon_{j+1}}^{[\lz]}a_k(x)\r|\noz\\
&&=\sum_{j=1}^{\infty}\sup_{t_{j+1}\le\varepsilon_{j+1}<\varepsilon_{j}\le t_j}
\lf|\int_{\varepsilon_{j+1}}^{\varepsilon_j}\partial_{t}\plz(a_{k})(x)\,dt\r|\noz\\
&&\le \sum_{j=1}^{\infty}\dint_{t_{j+1}}^{t_j}\lf|\partial_{t}\plz (a_{k})(x)\r|\,dt\noz\\
&&\le\int_{0}^{\infty}\lf|\partial_{t}\plz(a_{k})(x)\r|\,dt.
\end{eqnarray}
Combining $\int_\rrp a_k(x)\dmz(x)=0$ and the mean value theorem yields
\begin{eqnarray}\label{upper bdd par-t atom}
\lf|\partial_{t}\plz a_k(x)\r|&&=\lf|\dint_{\rrp}\partial_{t}\plz(x,y)a_k(y)\,\dmz(y)\r|\noz\\
&&=\lf|\dint_{\rrp}\lf[\partial_{t}\plz (x,y)-\partial_{t}\plz(x,x_k)\r]a_k(y)\,\dmz(y)\r|\noz\\
&&\le \dint_{I_k}\lf|\partial_{y}\partial_{t}\plz (x,\xi)\r||y-x_k||a_k(y)|\,\dmz(y),
\end{eqnarray}
where $\xi:=sy+(1-s)x_k$ for $s\in(0, 1)$.

For any $y,\,\xi\in I_k$, $x\in\rrp\setminus 2I_k$, we have $|x-\xi|\sim|x-y|$.
  Thus, by  \eqref{upp bdd deriva meas}, we see that
\begin{eqnarray*}
{\rm E}_{k,2}&&\lesssim \dint_{\rrp \backslash 2I_k}\boscpz(a_k)(x)\,\dmz(x)\\
&&\lesssim\dint_{\rrp\backslash 2I_k}\dint_{0}^{\infty}\dint_{I_k}\lf|\partial_{y}\partial_{t}\plz(x,\xi)\r||y-x_k||a_k(y)|\,\dmz(y)\,dt\,\dmz(x)\\
&&\lesssim\dint_{\rrp\backslash 2I_k}\dint_{0}^{2r}\dint_{I_k}\dfrac{|y-x_k|}{m_\lz(I(x_k, |x-x_k|))}\frac1{(|x-x_k|+t)^2}|a_k(y)|\,\dmz(y)\,dt\,\dmz(x)\\
&&\qquad +\dint_{\rrp\backslash 2I_k}\dint_{2r}^{\infty}\dint_{I_k}\dfrac{|y-x_k|}{m_\lz(I(x_k, |x-x_k|))}\frac1{(|x-x_k|+t)^2}
|a_k(y)|\,\dmz(y)\,dt\,\dmz(x)\\
&&\lesssim r^2\dsum_{j=1}^{\infty}\dint_{I_k}\dint_{2^{j+1}I_k\backslash2^jI_k}\dfrac1{m_\lz(I(x_k, |x-x_k|))}\frac1{|x-x_k|^2}\,\dmz(x)|a_k(y)|\,\dmz(y)\\
&&\qquad+r\dsum_{j=1}^{\infty}\dint_{I_k}\dint_{2^{j+1}I_k\backslash2^jI_k}\dfrac1{m_\lz(I(x_k, |x-x_k|))}\frac1{|x-x_k|}\,\dmz(x)|a_k(y)|\,\dmz(y)\\
&&\lesssim r^2\dsum_{j=1}^{\infty}\dfrac{1}{(2^jr)^2}\frac{m_\lz(2^{j+1}I_k)}{m_\lz(2^{j}I_k)}
+r\dsum_{j=1}^{\infty}\dfrac{1}{2^jr}\frac{m_\lz(2^{j+1}I_k)}{m_\lz(2^{j}I_k)}\\
&&\lesssim 1.
\end{eqnarray*}
%In the case of $x> 2|x-y|$, by \eqref{par-yt2}, \eqref{osci-pois-atom} and \eqref{upper bdd par-t atom},
% we have $x\sim y\sim\xi$ and
%\begin{eqnarray*}
%{\rm E}_{k,2}&&\lesssim \dint_{\rrp \backslash 2I_k}\boscpz(a_k)(x)\,\dmz(x)\\
%&&\lesssim\dint_{\rrp\backslash 2I_k}\dint_{0}^{2r}\dint_{I_k}\dfrac{|y-x_0|}{(x\xi)^{\lz}(|x-y|+t)^{3}}|a_k(y)|\,\dmz(y)\,dt\,\dmz(x)\\
%&&\qquad +\dint_{\rrp\backslash 2I_k}\dint_{2r}^{\infty}\dint_{I_k}\dfrac{|y-x_0|}{(x\xi)^{\lz}(|x-y|+t)^{3}}|a_k(y)|\,\dmz(y)\,dt\,\dmz(x)\\
%&&\lesssim r^2\dsum_{j=1}^{\infty}\dfrac{1}{(2^jr)^3}\Big(\dint_{2^{j+1}I_k}x^{-2\lz}x^{2\lz}dx\Big)\dint_{I_k}|a_k(y)|\,\dmz(y)\\
%&&\qquad+ r\dsum_{j=1}^{\infty}\dfrac{1}{(2^jr)^2}\Big(\dint_{2^{j+1}I_k}x^{-2\lz}x^{2\lz}dx\Big)\dint_{I_k}|a_k(y)|\,\dmz(y)\\
%&&\lesssim 1.
%\end{eqnarray*}
Therefore,
\begin{equation*}
\dint_{\rrp}{\mathcal O\lf(P^{[\lz]}_\ast\r)} f(x)\dmz(x)\lesssim \sum_{k}|\alpha_k|.
\end{equation*}
This means
\begin{equation*}
\dint_{\rrp}{\mathcal O\lf(P^{[\lz]}_\ast\r)} f(x)\dmz(x)\lesssim \|f\|_{\hoz}.
\end{equation*}
This finishes the proof of Theorem \ref{t-H1 semigroup}.
\end{proof}

We next present the proof of Theorem \ref{t-char semigroup}.

\begin{proof}[Proof of Theorem \ref{t-char semigroup}]
First assume that $f\in \hoz$. Then $f\in\loz$. Moreover, by Theorem \ref{t-H1 semigroup}, we see that
$ V_\rho(P^{[\lz]}_{\ast})(f)\in\loz$ and
\begin{equation}\label{char-var upp bdd}
\|f\|_\loz+\lf\| V_\rho\lf(P^{[\lz]}_{\ast}\r)(f)\r\|_\loz\ls \|f\|_\hoz.
\end{equation}

Conversely, let $f\in\loz$ such that $V_\rho(P^{[\lz]}_{\ast})(f)\in\loz$. We first claim that for a. e. $x\in\rrp$,
\begin{equation}\label{MP}
\cm_{P^{[\lz]}}(f)(x)\le \mathcal V_\rho\lf(P^{[\lz]}_{\ast}\r)(f)(x)+|f(x)|,
\end{equation}
where
\begin{equation}\label{poi maxi funct}
\cm_{P^{[\lz]}}(f)(x):=\sup_{t>0}\lf|P^{[\lz]}_tf(x)\r|;
\end{equation}
see \cite{bdt}.  In fact, it suffices to verify that for any $t\in (0, \fz)$ and
a. e. $x\in\rrp$,
\begin{equation}\label{upper bdd poi vari}
\lf|P^{[\lz]}_tf(x)\r|\le\mathcal V_\rho\lf(P^{[\lz]}_{\ast}\r)(f)(x)+|f(x)|.
\end{equation}
To this end, recall that $P^{[\lz]}_tf\to f$ in $\loz$ as $t\to 0^+$; see \cite[p.\,362]{yy}.
Then for a. e. $x\in\rrp$, we have that $P^{[\lz]}_tf(x)\to f(x)$  as $t\to 0^+$.
Thus for fixed $t\in(0, \fz)$ and any $\varepsilon\in(0, \fz)$, there exists $t_0\in(0, t)$ such that
for any $\tilde t\in(0, t_0)$, $$\lf|P^{[\lz]}_{\tilde t}f(x)-f(x)\r|<\varepsilon.$$
Let $\tilde t\in(0, t_0)$ and $\{t_j\}_{j=1}^\fz\searrow0$ satisfying that  $t_1:=t$ and $t_2:=\tilde t$.
Then we conclude that
\begin{eqnarray*}
\lf|P^{[\lz]}_tf(x)\r|&\le& \lf|P^{[\lz]}_tf(x)-P^{[\lz]}_{\tilde t}f(x)\r|+\lf|P^{[\lz]}_{\tilde t}f(x)-f(x)\r|+|f(x)|\\
&<&\mathcal V_\rho\lf(P^{[\lz]}_{\ast}\r)(f)(x)+\varepsilon+|f(x)|.
\end{eqnarray*}
Since $\varepsilon$ is arbitrary,  we see that \eqref{upper bdd poi vari} holds.
This shows \eqref{MP}.

From \eqref{MP} and the assumption that $f,\,V_\rho(P^{[\lz]}_{\ast})(f)\in\loz$, it follows that $\cm_{P^{[\lz]}} f\in\loz$.
Recall that the space $\hoz$ is characterized in \cite[Theorem 1.7]{bdt} in terms of $\cm_{P^{[\lz]}}$ that
a function $f\in\loz$ is in $\hoz$ if and only $\cm_{P^{[\lz]}}f\in \loz$ and
$$\|f\|_\hoz\sim \|f\|_\loz+\lf\|\cm_{P^{[\lz]}}f\r\|_\loz.$$
This further implies that $f\in\hoz$ and
$$\|f\|_\hoz\ls \|f\|_\loz+\lf\| V_\rho\lf(P^{[\lz]}_{\ast}\r)(f)\r\|_\loz,$$
which together with \eqref{char-var upp bdd} completes the proof of Theorem \ref{t-char semigroup}.
\end{proof}

%%%%%%%%%%%%%%%%%%%%%%%%%%%%%%%%%%%%%%%%%%%%%%%%%%%%%%%%%%%%%%%%%%%%%%%%%%%%%%%%%%%%%%%%%%%%%%%%%%%%%%%%%%%%%%%%%%%%%%%%%%%%%%%%%%
%%%%%%%%%%%%%%%%%%%%%%%%%%%%%%%%%%%%%%%%%%%%%%%%%%%%%%%%%%%%%%%%%%%%%%%%%%%%%%%%%%%%%%%%%%%%%%%%%%%%%%%%%%%%%%%%%%%%%%%%%%%%%%%%%%

\section{The $BMO(\rrp,\dmz)$-type estimation}\label{s5}

 In this section, inspired by the methods in \cite{b14}, we apply Theorem \ref{t-bdd semigroup} and some properties of $BMO(\rrp,\dmz)$ space
 defined in \cite{cw77} to prove Theorem \ref{t-bmo semigroup}.

\begin{proof}[Proof of Theorem \ref{t-bmo semigroup}]
 Let $f\in \bmoz$, it suffices to  prove that for any interval $I:=I(x_0,r)$ with $x_0\in\rrp$ and $r>0$,
\begin{equation*}
\dfrac{1}{m_{\lz}(I)}\dint_{I}\lf|\boscpz (f)(x)-C_{B}\r|\dmz(x)\lesssim\|f\|_{\bmoz},
\end{equation*}
where $j_0\in\mathbb{N}$ such that $t_{j_0-1}\geq 8r>t_{j_0}$,
\begin{eqnarray*}
C_B:=&& \Bigg[\dsum_{j=1}^{j_0-2}\sup_{t_{j+1}\leq\varepsilon_{j+1}<\varepsilon_{j}\leq t_{j}}
\lf|P_{\varepsilon_{j+1}}^{[\lz]}f(x_0)-P_{\varepsilon_j}^{[\lz]}f(x_0)\r|^2\\
&&\qquad+\sup_{t_{j_0}\leq\varepsilon_{j_0}<\varepsilon_{j_0-1}\leq t_{j_0-1}}
\lf|P_{\varepsilon_{j_0}}^{[\lz]}(f,I)-P_{\varepsilon_{j_0-1}}^{[\lz]}(f,I)\r|^2 \Bigg]^{1/2},
\end{eqnarray*}
$f_{I,\,\lz}$ is as \eqref{f-i} and
$P_{t}^{[\lz]}(f,I):=P_{t}^{[\lz]}(f_{I,\,\lz})$ if $t<8r$ and $P_{t}^{[\lz]}(f,I):=P_{t}^{[\lz]}f(x_0)$ if $t\geq8r$.
Therefore,
\begin{eqnarray*}
&&\Big|\boscpz f(x)-C_B\Big|\\
&&\qquad=\Big|\Big(\dsum_{j=1}^{\infty}\dsup_{t_{j+1}\le\varepsilon_{j+1}<\varepsilon_j
\le t_j}\lf|P_{\varepsilon_{j+1}}^{[\lz]}f(x)-P_{\varepsilon_{j}}^{[\lz]}f(x)\r|^2\Big)^{1/2}-C_B\Big|\\
&&\qquad\le\Bigg|\Bigg[\dsum_{j=1}^{j_0-2}\dsup_{t_{j+1}\le\varepsilon_{j+1}<\varepsilon_j\le t_j}
\lf|P_{\varepsilon_{j+1}}^{[\lz]}f(x)-P_{\varepsilon_{j}}^{[\lz]}f(x)\r|^2\\
&&\qquad\qquad+
\dsup_{t_{j_0}\le\varepsilon_{j_0}<\varepsilon_{j_0-1}\le t_{j_0-1}}\lf|P_{\varepsilon_{j_0}}^{[\lz]}f(x)-P_{\varepsilon_{j_0-1}}^{[\lz]}f(x)\r|^2\\
&&\qquad\qquad\qquad+
\dsum_{j=j_0}^{\infty}\dsup_{t_{j+1}\le\varepsilon_{j+1}<\varepsilon_j\le t_j}
\lf|P_{\varepsilon_{j+1}}^{[\lz]}f(x)-P_{\varepsilon_{j}}^{[\lz]}f(x)\r|^2\Bigg]^{1/2}\\
&&\qquad\qquad-\Bigg[\dsum_{j=1}^{j_0-2}
\dsup_{t_{j+1}\le\varepsilon_{j+1}<\varepsilon_j\le t_j}\lf|P_{\varepsilon_{j+1}}^{[\lz]}f(x_0)-P_{\varepsilon_{j}}^{[\lz]}f(x_0)\r|^2\\
&&\qquad\qquad\qquad+
\dsup_{t_{j_0}\le\varepsilon_{j_0}<\varepsilon_{j_0-1}\le t_{j_0-1}}
\lf|P_{\varepsilon_{j_0}}^{[\lz]}(f,I)-P_{\varepsilon_{j_0-1}}^{[\lz]}(f,I)\r|^2\Bigg]^{1/2}\Bigg|\\
&&\qquad\le\lf[\dsum_{j=j_0}^{\infty}\dsup_{t_{j+1}\le\varepsilon_{j+1}<\varepsilon_j\le t_j}\lf|
P_{\varepsilon_{j+1}}^{[\lz]}f(x)-P_{\varepsilon_{j}}^{[\lz]}f(x)\r|^2\r]^{1/2}\\
&&\qquad\qquad+\lf[\dsum_{j=1}^{j_0-2}\dsup_{t_{j+1}\le\varepsilon_{j+1}<\varepsilon_j\le t_j}
\lf|\lf(P_{\varepsilon_{j+1}}^{[\lz]}-P_{\varepsilon_{j}}^{[\lz]}\r)(f(x)-f(x_0))\r|^2\r]^{1/2}\\
&&\qquad\qquad\qquad+\dsup_{0<t\le 8r}\lf|\plz(f-f_{I,\,\lz})(x)\r|+\dsup_{t>8r}\lf|\plz(f-f(x_0))(x)\r|\\
&&\qquad=:{\rm F}_1(x)+{\rm F}_2(x)+{\rm F}_3(x)+{\rm F}_4(x).
\end{eqnarray*}
It follows that
\begin{equation*}
\dfrac{1}{m_{\lz}(I)}\dint_{I}\lf|\boscpz f(x)-C_{B}\r|\dmz(x)\lesssim
\dsum_{i=1}^{4}\dfrac{1}{m_{\lz}(I)}\dint_{I}{\rm F}_i(x)dm_{\lz}(x)=:\dsum_{i=1}^{4}{\rm G}_i.
\end{equation*}
We write $$f=(f-f_{I,\,\lz})\chi_{2I}+(f-f_{I,\,\lz})\chi_{\rrp\backslash 2I}+f_{I,\,\lz}=:f_1+f_2+f_3.$$  Then, for ${\rm G}_1$ we have
\begin{eqnarray*}
{\rm G}_{1}&&\lesssim \dsum_{k=1}^{3}\dfrac{1}{m_{\lz}(I)}\dint_{I}\lf(\dsum_{j=j_0}^{\infty}\dsup_{t_{j+1}
\le\varepsilon_{j+1}<\varepsilon_j\le t_j}\lf|\lf(P_{\varepsilon_{j+1}}^{[\lambda]}-P_{\varepsilon_{j}}^{[\lz]}\r)f_k(x)\r|^2\r)^{1/2}dm_{\lz}(x)\\
&&=:{\rm G}_{11}+{\rm G}_{12}+{\rm G}_{13}.
\end{eqnarray*}

 From the conservation property $\plz(1)=1$ for all $t>0$, we deduce that ${\rm G}_{13}=0$.

By H\"older's inequality, John-Nirenberg's inequality (see \cite{cw77}) and the fact $\oscpz$ is bounded on $L^2(\rrp, \dmz)$, we obtain
\begin{eqnarray*}
{\rm G}_{11}&&\le \lf(\frac{1}{m_{\lz}(I)}\int_{I}\lf|\boscpz(f_1)(x)\r|^2\,\dmz(x)\r)^{1/2}\\
&&\lesssim\lf(\frac{1}{m_{\lz}(I)}\int_{2I}|f(x)-f_{I,\,\lz}|^2\,\dmz(x)\r)^{1/2}\\
&&\lesssim\|f\|_{\bmoz}.
\end{eqnarray*}

For ${\rm G}_{12}$, we write
\begin{eqnarray*}
{\rm G}_{12}&&\le\dfrac{1}{m_{\lz}(I)}\int_{I}\lf(\dsum_{j=j_0}^{\infty}\sup_{t_{j+1}
\le\varepsilon_{j+1}<\varepsilon_j\le t_j}\lf|\lf(P_{\varepsilon_j}^{[\lz]}-P_{\varepsilon_{j+1}}^{[\lz]}\r)f_2(x)\r|\r)\dmz(x)\\
&&\lesssim \frac{1}{m_{\lz}(I)}
\dint_{I}\int_{0}^{8r}\int_{0}^{\infty}\lf|\partial_{t}\plz(x,y)\r|\lf|f(y)-f_{I,\,\lz}\r|\chi_{\rrp \backslash 2I}\,y^{2\lz}\,dy\,dt\,\dmz(x)\\
&&\lesssim \sum_{k=1}^{\infty}\dfrac{1}{m_{\lz}(I)}\dint_{I}
\dint_{0}^{8r}\int_{2^{k+1}I\,\backslash\,2^kI}\lf|\partial_{t}P_{t}^{[\lz]}(x,y)\r|\lf|f(y)-f_{I,\,\lz}\r|y^{2\lz}\,dy\,dt\,dm_{\lz}(x).
\end{eqnarray*}

Similar to the estimate of \eqref{upp bdd deriva meas}, by Proposition \ref{p-upp bdd poisson} {\rm iii)}, we see that
for any $y\in\rrp\setminus2I$ and $x\in I$,
\begin{equation*}
\lf|\partial_{t}\plz(x,y)\r|\ls \frac1{m_\lz(I(x_0, |y-x_0|))}\frac1{|y-x_0|+t}.
\end{equation*}
From this and the fact that for any $k\in\nn$,
\begin{equation}\label{bmo regul}
|f_{I,\,\lz}-f_{2^kI,\,\lz}|\ls k\|f\|_\bmoz,
\end{equation}
it follows that
\begin{eqnarray*}
{\rm G}_{12}&&\lesssim\sum_{k=1}^{\fz}\frac{r}{m_{\lz}(I)}\dint_{I}\int_{2^{k+1}I\,\backslash\,2^kI}
\frac{|f(y)-f_{I,\,\lz}|}{m_\lz(I(x_0, |y-x_0|))}\frac1{|y-x_0|}y^{2\lz}\,dy\,\dmz(x)\\
&&\lesssim\dsum_{k=1}^{\fz}\frac{r}{2^kr}\frac1{m_\lz(2^kI)}\int_{2^{k+1}I}|f(y)-f_{I,\,\lz}|\,\dmz(y)\\
&&\lesssim\|f\|_{\bmoz}\sum_{k=1}^{\fz}\dfrac{k}{2^k}\\
&&\lesssim\|f\|_{\bmoz}.
\end{eqnarray*}
Combining the arguments of ${\rm G}_{11}$ and ${\rm G}_{12}$, we obtain ${\rm G}_{1}\lesssim\|f\|_{\bmoz}$.

For ${\rm G}_2$, by the mean value theorem, there exists $\xi:=sx_0+(1-s)x$ for some $s\in (0,\,1)$ such that
\begin{eqnarray*}
{\rm G}_{2}
&&\lesssim\frac{1}{m_{\lz}(I)}\int_{I}\int_{8r}^{\infty}\lf|\partial_{t}\plz(f-f_{I,\,\lz})(x)-\partial_{t}\plz(f-f_{I,\,\lz})(x_0)\r|\,dt\,\dmz(x)\\
&&\lesssim\frac{1}{m_{\lz}(I)}\int_{I}\int_{8r}^{\infty}\int_{0}^{\infty}\lf|
\partial_{t}\plz(x,y)-\partial_{t}\plz(x_0,y)\r|\lf|f(y)-f_{I,\,\lz}\r|y^{2\lz}\,dy\,dt\,\dmz(x)\\
&&\lesssim\frac{1}{m_{\lz}(I)}\int_{I}\int_{8r}^{\infty}\int_{0}^{\infty}\lf|
\partial_x\partial_{t}\plz(\xi,y)\r||x-x_0||f(y)-f_{I,\,\lz}|y^{2\lz}\,dy\,dt\,\dmz(x)\\
&&\lesssim\frac{1}{m_{\lz}(I)}\int_{I}\int_{8r}^{\infty}\int_{2I}\lf|\partial_x\partial_{t}\plz(\xi,y)\r||x-x_0||f(y)-f_{I,\,\lz}|y^{2\lz}\,dy\,dt\,\dmz(x)\\
&&\qquad+\frac{1}{m_{\lz}(I)}\int_{I}\int_{8r}^{\infty}\int_{\rrp\backslash 2I}
\lf|\partial_x\partial_{t}\plz(\xi,y)\r||x-x_0||f(y)-f_{I,\,\lz}|y^{2\lz}\,dy\,dt\,\dmz(x)\\
&&=:{\rm G}_{21}+{\rm G}_{22}.
\end{eqnarray*}

To estimate ${\rm G}_{21}$, we claim that for $t>8r$, $x,\,\xi\in I$ and $y\in 2I$,
\begin{equation}\label{upp bdd high deriva meas}
\lf|\partial_x\partial_{t}\plz(\xi,y)\r|\ls \frac1{m_\lz(I)}\frac1{t^2}.
\end{equation}
In fact, since by assumption, $x_0\ge r$, we then prove \eqref{upp bdd high deriva meas} by
 considering the following two cases.

 Case (i) $x_0\ge 4r$. In this case, we have $x\sim\xi\sim y\sim x_0$ and
 $m_\lz(I)\sim x_0^{2\lz}r.$
 By these facts and \eqref{p-p-t2}, we see that
  \begin{equation*}
\lf|\partial_x\partial_{t}\plz(\xi,y)\r|\ls \frac1{(y\xi)^\lz t^3}\ls \frac1{m_\lz(I)}\frac1{t^2}.
\end{equation*}

Case (ii) $r\le x_0< 4r$. In this case, $m_\lz(I)\sim r^{2\lz+1}$.  From this and \eqref{p-p-t1},
we deduce that
  \begin{equation*}
\lf|\partial_x\partial_{t}\plz(\xi,y)\r|\ls \frac1{t^{2\lz+3}}\ls \frac1{m_\lz(I)}\frac1{t^2}.
\end{equation*}
This implies the claim \eqref{upp bdd high deriva meas}.

By the claim \eqref{upp bdd high deriva meas} together with \eqref{bmo regul}, we conclude that
\begin{eqnarray*}
{\rm G}_{21}&&\lesssim \frac{1}{m_{\lz}(I)}\int_{I}\int_{8r}^{\infty}\int_{2I}
\frac{|x-x_0||f(y)-f_{I,\,\lz}|}{m_\lz(I)t^2}\,y^{2\lz}dy\,dt\,\dmz(x)\\
&&\lesssim\frac{r}{m_{\lz}(I)}\int_{2I}\frac{|f(y)-f_{I,\,\lz}|\,y^{2\lz}}{8r}\,dy\\
%&&\lesssim\frac{rm_{\lz}(2I)}{r^{2\lz+2}}\|f\|_{\bmoz}\\
&&\lesssim\|f\|_{\bmoz}.
\end{eqnarray*}

For $x,\,\xi\in I$ and $y\in\rrp\setminus2I$, we see  that
$$|\xi-y|\sim |x-y|\sim|x_0-y|,$$
and
\begin{equation*}
\lf|\partial_x\partial_{t}\plz(\xi,y)\r|\ls \frac1{m_\lz(I(x_0, |x_0-y|))}\frac1{(|x_0-y|+t)^2}.
\end{equation*}
 Applying \eqref{bmo regul} again, we obtain
\begin{eqnarray*}
{\rm G}_{22}&&\lesssim\sum_{k=1}^{\infty}\frac{r}{m_{\lz}(I)}\int_{I}\int_{8r}^{\infty}\int_{2^{k+1}I\backslash 2^kI}
\frac{|f(y)-f_{I,\,\lz}|}{m_\lz(I(x_0, |x_0-y|))}\frac1{(|x_0-y|+t)^2}y^{2\lz}\,dy\,\dmz(x)\\
&&\lesssim\sum_{k=1}^{\infty}\dfrac{r}{m_{\lz}(I)}\int_{I}\int_{2^{k+1}I\backslash 2^kI}\dfrac{|f(y)-f_{I,\,\lz}|}
{m_\lz(I(x_0, |x_0-y|))}\frac1{|x_0-y|+8r}y^{2\lz}\,dy\,dm_{\lambda}(x)\\
&&\lesssim\sum_{k=1}^{\fz}\dfrac{r}{2^kr}\frac1{m_\lz(2^kI)}\int_{2^{k+1}I}|f(y)-f_{I,\,\lz}|y^{2\lz}\,dy\\
%&&\qquad+\sum_{k=k_{0}+1}^{\infty}\dfrac{r}{m_{\lz}(I)(2^kr)^{2\lz+2}}\int_{I}\dmz(x)\int_{2^{k+1}I}|f(y)-f_{I,\,\lz}|y^{2\lz}\,dy\\
&&\lesssim\|f\|_{\bmoz}\sum_{k=1}^{\fz}\dfrac{k}{2^k}\\
&&\lesssim\|f\|_{\bmoz}.
\end{eqnarray*}
%In the case of $x\geq 2|x-y|$, we also have
%\begin{eqnarray*}
%{\rm G}_{22}&&\lesssim\sum_{k=1}^{k_0}\dfrac{r}{m_{\lambda}(I)}\int_{I}\int_{2^{k+1}I\backslash 2^kI}
%\dfrac{|f(y)-f_{I,\,\lz}|y^{2\lambda}}{(xy)^{\lambda}(|x-y|+8r)^{2}}\,dy\,dm_{\lambda}(x)\\
%&&\qquad+\sum_{k=k_{0}+1}^{\infty}\dfrac{r}{m_{\lambda}(I)}\int_{I}\int_{2^{k+1}I\backslash 2^kI}
%\dfrac{|f(y)-f_{I,\,\lz}|y^{2\lambda}}{(xy)^{\lambda}(|x-y|+8r)^{2}}\,dy\,dm_{\lambda}(x)\\
%&&\lesssim\sum_{k=1}^{k_0}\dfrac{rk|I|m_{\lambda}(2^{k+1}I)}{m_{\lambda}(I)(2^kr)^2}\|f\|_{\bmoz)}\\
%&&\qquad+\sum_{k=k_{0}+1}^{\infty}
%\dfrac{rkm_{\lambda}(2^{k+1}I)}{(2^kr)^{2\lambda+2}}\|f\|_{\bmoz}\\
%&&\lesssim\|f\|_{\bmoz)}.
%\end{eqnarray*}
Combining the estimates for ${\rm G}_{21}$ and ${\rm G}_{22}$, we have ${\rm G}_2\lesssim\|f\|_{\bmoz}$.

Now we estimate $\rm{G}_3$  by writing
\begin{eqnarray*}
{\rm G}_{3}&&\le\dfrac{1}{m_{\lambda}(I)}\int_{I}\sup_{0<t\le8r}\lf|P_{t}^{[\lambda]}((f-f_{I,\,\lz})\chi_{2I})(x)\r|\,dm_{\lambda}(x)\\
&&\qquad+\dfrac{1}{m_{\lambda}(I)}\int_{I}\sup_{0<t\le8r}\lf|P_{t}^{[\lambda]}((f-f_{I,\,\lz})\chi_{\mathbb{R}_{+}\backslash {2I}})(x)\r|\,dm_{\lambda}(x)\\
&&=:{\rm G}_{31}+{\rm G}_{32}.
\end{eqnarray*}
Recall that the Poisson maximal function $\cm_{P^{[\lz]}}f$  in \eqref{poi maxi funct}
is bounded on $\lpz$ for any $p\in(1, \fz)$(see \cite{bdt} or \cite[p.\,73]{st}).
Then by this, H\"older's inequality and John-Nirenberg's inequality, we conclude that
\begin{eqnarray*}
{\rm G}_{31}&&\lesssim \lf(\dfrac{1}{m_{\lambda}(I)}\int_{I}\lf[\cm_{P^{[\lz]}}((f-f_{I,\,\lz})\chi_{2I})(x)\r]^2dm_{\lambda}(x)\r)^{1/2}\\
&&\lesssim \lf(\dfrac{1}{m_{\lambda}(I)}\dint_{2I}|f(y)-f_{I,\,\lz}|^2\,dm_{\lambda}(x)\r)^{1/2}\\
&&\lesssim\|f\|_{\bmoz}.
\end{eqnarray*}
To estimate ${\rm G}_{31}$, observe that by Proposition \ref{p-upp bdd poisson} {\rm i)}, for $0<t\le 8r$ and $x\in I$,
\begin{eqnarray*}
\lf|P_{t}^{[\lambda]}((f-f_{I,\,\lz})\chi_{\mathbb{R}_{+}\backslash {2I}})(x)\r|
&&\lesssim \int_{\mathbb{R}_{+}\backslash {2I}}\lf|P_{t}^{[\lambda]}(x,y)\r||f(y)-f_{I,\,\lz}|\,y^{2\lambda}dy\\
&&\lesssim\sum_{k=1}^{\infty}\int_{2^{k+1}I\backslash {2^kI}}\lf|P_{t}^{[\lambda]}(x,y)\r||f(y)-f_{I,\,\lz}|\,y^{2\lambda}dy\\
&&\ls \sum_{k=1}^{\infty}\int_{2^{k+1}I\backslash {2^kI}}\frac{|f(y)-f_{I,\,\lz}|}{m_\lz(I(x_0, |x_0-y|))}\frac t{|x_0-y|+t}\,y^{2\lambda}dy.
\end{eqnarray*}
Therefore, from this and \eqref{bmo regul}, we deduce that
\begin{eqnarray*}
{\rm G}_{32}&&\lesssim \sum_{k=1}^{\fz}\frac{r}{2^kr}\frac{1}{m_{\lz}(2^kI)}\int_{2^{k+1}I}|f(y)-f_{I,\,\lz}|y^{2\lz}\,dy
\lesssim \|f\|_{\bmoz}.
\end{eqnarray*}
Therefore ${\rm G}_3\lesssim\|f\|_{\bmoz}.$

Since $\plz(1)=1$ for any $t>0$, by the mean value theorem, we see that
\begin{eqnarray*}
&&\sup_{t>8r}\lf|\plz f(x)-\plz f(x_0)\r|\\
&&\quad=\sup_{t>8r}\lf|\plz (f-f_{I,\,\lz})(x)-\plz (f-f_{I,\,\lz})(x_0)\r|\\
&&\quad\ls\sup_{t>8r}\int_{\rrp}\lf|\plz (x,y)-\plz(x_0,y)\r||f(y)-f_{I,\,\lz}|y^{2\lz}\,dy\\
&&\quad\ls\sup_{t>8r}\int_{\rrp}\lf|\partial_x\plz(\xi,y)\r||x-x_0||f(y)-f_{I,\,\lz}|y^{2\lz}\,dy,
 \end{eqnarray*}
 where $\xi\in I$.

Applying Proposition \ref{p-upp bdd poisson} {\rm ii)} and arguing as \eqref{upp bdd high deriva meas} and \eqref{upp bdd deriva meas},
we see that for $t>8r$, if $x,\,\xi\in I$ and $y\in 2I$,
\begin{equation*}
\lf|\partial_x\plz(\xi,y)\r|\ls \frac1{m_\lz(I)}\frac 1t;
\end{equation*}
and if $x,\,\xi\in I$ and $y\in \rrp\setminus 2I$,
\begin{equation*}
\lf|\partial_x\plz(\xi,y)\r|\ls \frac1{m_\lz(I(x_0, |x_0-y|))}\frac t{(|x_0-y|+t)^2}.
\end{equation*}
By this fact and \eqref{bmo regul}, we conclude that
 \begin{eqnarray*}
{\rm G}_4&&\lesssim\frac{1}{m_{\lz}(I)}\int_{I}\sup_{t>8r}\int_{2I}\frac{|f(y)-f_{I,\,\lz}|}{m_\lz(I)}\frac{|x-x_0|}{t}
y^{2\lz}\,dy\,\dmz(x)\\
&&\quad+\frac{1}{m_{\lz}(I)}\int_{I}\sup_{t>8r}\int_{\rrp\backslash2I}\frac{t|x-x_0|}{m_\lz(I(x_0, |x_0-y|))}\frac{|f(y)-f_{I,\,\lz}|}{(|x_0-y|+t)^2}
y^{2\lz}\,dy\,\dmz(x)\\
&&\lesssim\sup_{t>8r}\lf[\int_{2I}\frac{|f(y)-f_{I,\,\lz}|}{m_\lz(I)}\frac{r}{t}y^{2\lz}\,dy
+\int_{\rrp\backslash2I}\frac{|f(y)-f_{I,\,\lz}|}{m_\lz(I(x_0, |x_0-y|))}\frac{r}{|x_0-y|+t}y^{2\lz}\,dy\r]\\
&&\lesssim\frac1{m_\lz(I)}\int_{2I}|f(y)-f_{I,\,\lz}|y^{2\lz}\,dy
+r\int_{\rrp\backslash2I}\frac{|f(y)-f_{I,\,\lz}|}{m_\lz(I(x_0, |x_0-y|))|x_0-y|}y^{2\lz}\,dy\\
&&\ls\|f\|_\bmoz.
\end{eqnarray*}
Consequently, we get $\oscpz$ is bounded on $\bmoz$.
The proof of $\varpz$ is similar, we omit the details. This completes the proof of Theorem \ref{t-bmo semigroup}.
\end{proof}

\medskip

{\bf Acknowledgments}

This work is done during Dongyong Yang's visit to Macquarie University. He would like to
thank Professors Xuan Thinh Duong and Ji Li for their generous help.

{The first author is supported by the NNSF of China (Grant Nos. 11371295, 11471041) and the NSF of Fujian Province of China (No. 2015J01025).
The seconde author is supported by the NNSF of China (Grant No. 11571289) and the State Scholarship Fund of China (No. 201406315078).}

%\medskip

%Xuan Thinh Duong

%\smallskip
%
%Department of Mathematics, Macquarie University, NSW, 2109, Australia.
%
%\smallskip
%
%{\it E-mail}: \texttt{xuan.duong@mq.edu.au}
%
%\vspace{0.3cm}

Huoxiong Wu

School of Mathematical Sciences, Xiamen University, Xiamen 361005,  China
\smallskip

{\it E-mail}: \texttt{huoxwu@xmu.edu.cn}

\vspace{0.3cm}
Dongyong Yang

%\smallskip

School of Mathematical Sciences, Xiamen University, Xiamen 361005,  China

\smallskip

{\it E-mail}: \texttt{dyyang@xmu.edu.cn }

\vspace{0.3cm}

Jing Zhang

School of Mathematical Sciences, Xiamen University, Xiamen 361005,  China

School of Mathematics and Statistics, Yili Normal College, Yining Xinjiang 835000, China
\smallskip

{\it E-mail}: \texttt{zjmath66@126.com}

\end{document}